\renewcommand{\hl}[1]{#1}
\numberwithin{equation}{section}
\newtheorem{thm}{Theorem}
\newtheorem{cor}{Corollary} 
\newtheorem{lem}{Lemma}[section]
\newtheorem{prop}{Proposition}
\theoremstyle{definition}
\newtheorem{defn}{Definition}[section]
\theoremstyle{remark}
\newtheorem{rem}{Remark}[section]
\newcommand{\pare}[1]{\left(#1\right)}
\newcommand{\abs}[1]{\left\lvert #1 \right\rvert}
\newcommand{\nrm}[1]{\left\|#1\right\|}
\newcommand{\AnD}{\quad\text{and}\quad}
\newcommand{\AND}{\qquad\text{and}\qquad}
\newcommand{\RR}{\mathbb{R}}
\newcommand{\Ss}{\mathbb{S}}
\newcommand{\CC}{\mathbb{C}}
\newcommand{\im}{\mathtt{i}}
\newcommand{\In}{\mathrm{in}}
\newcommand{\Sc}{\mathrm{sc}}
\newcommand{\sr}{r^\alpha}
\newcommand{\esdr}{e^{-\delta\sr/h}}
\DeclareMathOperator{\supp}{\mathrm{supp}}
\newcommand{\db}{\overline{\partial}}
\newcommand{\dd}{\partial}
\newcommand{\idb}{\mathscr{T}}
\newcommand{\idd}{\,\,\overline{\!\!\idb}}
\newcommand{\adst}{\mathcal{D}'}
\newcommand{\So}{\mathcal{S}}
\newcommand{\Sa}{\mathcal{S}_0}
\newcommand{\Sb}{\mathcal{S}_1}
\newcommand{\weg}{\mathcal{K}}
\newcommand{\imP}{\varphi}
\newcommand{\epa}{a}
\newcommand{\jump}{c}
\begin{document} 
\title{A New Type of CGO Solutions and its Applications in Corner Scattering}
\author{Jingni Xiao\footnote{Department of Mathematics, Rutgers University, Piscataway, NJ 08854-8019, USA,
 email:  jingni.xiao@rutgers.edu }}
\date{}

\maketitle
	
\begin{abstract}
	We consider corner scattering for the operator $\nabla \cdot \gamma(x)\nabla +k^2\rho(x)$ in $\mathbb{R}^2$, with $\gamma$ a positive definite symmetric matrix and $\rho$ a positive scalar function. A corner is referred to one that is on the boundary of the (compact) support of $\gamma(x)-I$ or $\rho(x)-1$, where $I$ stands for the identity matrix. \hl{We assume that $\gamma$ is a scalar function in a small neighborhood of the corner.} We show that any admissible incident field will be scattered by such corners, which are allowed to be concave. Moreover, we provide a brief discussion on the existence of non-scattering waves when $\gamma-I$ has a jump across the corner.
		In order to prove the results, we construct a new type of complex geometric optics (CGO) solutions.
\end{abstract}

\section{Introduction}
In this paper, we consider the question whether corners scatter. This is a natural question arising in both scattering and inverse scattering. It is also connected with the theory of transmission eigenvalue problems.
We start with the mathematical formulation of the scattering problem under consideration.
Let $\rho\in L^{\infty}(\RR^2;\RR)$ and $\gamma=(\gamma_{jl})\in L^{\infty}(\RR^2;\mathcal{M}_{2\times 2}(\RR))$ with $\rho-1$ and $\gamma-I_{2\times 2}$ compactly supported. We consider the following time-harmonic scattering problem 
\begin{equation}\label{eq:MainGov1}
	\begin{split}
		\nabla\cdot \gamma \nabla u + k^2 \rho u=0&\quad\mbox{in $~\RR^2$},
		\\u=u^{\In}+u^{\Sc}&\quad\mbox{in $~\RR^2$},
		\\\lim_{|x|\to\infty} |x|^{1/2}(\frac{\partial }{\partial |x|}u^{\Sc}-\im ku^{\Sc})=0, &~\mbox{ uniformly for all $\hat{x}:=\frac{x}{|x|}\in \Ss^1$}.
	\end{split}
\end{equation}
Here, $k\in\RR_+$ is the wavenumber; $u^{\In}$ is a given incident field which satisfies the Helmholtz equation
$\Delta u^{\In}+k^2u^{\In}=0$ in $\RR^2$; and $u^{\Sc}$ and $u$ are referred to as, respectively, the scattered field and the total field. 
The coefficients $\gamma$ and $\rho$ in \eqref{eq:MainGov1} represent the material properties of the media; in particular, $\rho-1$ and $\gamma-I_{2\times 2}$ stand for the inhomogeneities in a homogeneous background. 
Moreover, we assume that {$\rho$ and $\gamma$ satisfy} 
\begin{equation}\label{eq:gammEllip}
	\rho\ge a_0\quad\text{and}\quad
	\xi^T\gamma\,\xi\geq a_0
	\quad \mbox{{a.e. in $\RR^2$ for all $\xi\in\Ss^1$}},
\end{equation}
with some {positive} constant $a_0$. 
The last equation in \eqref{eq:MainGov1} is called the Sommerfeld radiation condition, which guarantees the out-going property of $u^{\Sc}$. 
As a consequence, the scattered field satisfies the following asymptotic expansion
\begin{equation*}
	u^{\Sc}(x)=\frac{e^{\im k|x|}}{|x|^{1/{2}}}\pare{u^\infty(\hat x)+O (\frac{1}{|x|} ) }\qquad \text{as}~|x|\to\infty,
\end{equation*}
uniformly for all $\hat{x}\in\Ss^1$ (see, \cite{ColKre19book}). The function $u^{\infty}$ defined on $\Ss^1$ is referred to as the far-field pattern. Moreover, by Rellich's lemma, $u^{\infty}$ is \hl{a} one-to-one correspondence with $u^{\Sc}$ restricted to $\RR^2\setminus\overline{\Omega}$, where $\Omega$ can be any open bounded simply-connected Lipschitz domain such that $\supp(\rho-1)\cup\supp(\gamma-I)\subseteq \overline{\Omega}$.  

We are interested in the situation when an incident field $u^{\In}$ with the associated wavenumber $k$ is not scattered as measured by an exterior or far-field observer, that is, when $u^{\infty}\equiv 0$. 
If this is the case, by Rellich's lemma, the corresponding scattered field $u^{\Sc}$ will be identically zero in $\RR^2\setminus\overline{\Omega}$, where $\Omega$ is a domain as specified preciously. As a consequence, we obtain a nontrivial eigenfunction for the so-called interior transmission eigenvalue problem on $\Omega$. Hence, we can conclude that, (the square of) a ``non-scattering wavenumber'' must be an interior transmission eigenvalue, where the non-scattering incident field along with the total field (restricted in $\Omega$) is a corresponding eigenfunction. However, the converse is not valid. In fact, if a transmission eigenfunction is also an incident wave, then it needs also to ``exist'' in the exterior of $\Omega$ and remaining a solution to the corresponding PDE. In particular, the eigenfunction has to be sufficiently smooth (and in fact real-analytic) at least up to the boundary of $\Omega$. Unfortunately, this required smoothness or extension is not guaranteed for general interior transmission eigenfunctions. For the existence and regularity of interior transmission eigenfunctions, we refer the reader to the monograph \cite{CCH16} and the references therein. 

It is known from existing results that the possibility of non-scattering, or equivalently, the ``extend-ability'' of interior transmission eigenfunction, depends on the shape and structure of the media (the coefficients of the PDEs).
In particular, for spherically stratified media the existence of non-scattering waves is known; See, \cite{ColtonMonk88} for the case of $\gamma\equiv 1$ and $\rho=\rho(|x|)$, and similar arguments work when $\gamma=\gamma(|x|)$ which is a scalar function. In this case, the set of eigenfunctions is identical to that of non-scattering waves, and hence there are infinitely many non-scattering wave numbers.
On the other hand, it has been proven {recently in} \cite{VogX20,VogX21} that for any $C^{2,\alpha}$ strictly convex domain in $\RR^2$ that is not a sphere, there are at most finitely many non-scattering wave numbers concerning incident plane waves with a fixed incident direction. Similar finiteness results are also proven for ellipses and its small perturbations concerning Herglotz incident waves with a fixed density function.

In the current paper, we investigate the case when there is a corner on the boundary of {$\supp(\rho-1)\cup\supp(\gamma-I)$. 
The} first related result is established in \cite{BPS14} for the operator $\Delta+k^2\rho$ in $\RR^n$, $n\ge 2$. The authors prove that, when the support of $\rho-1$ contains a (convex) rectangular corner on the boundary, at which $\rho\neq 1$, then all incident fields must be scattered. 
One of the main technique in the argument is the application of the so-called complex geometric optics (CGO) solutions, which has a leading term $e^{\im \tau x\cdot \eta}$ with $\eta\in\CC^n$ and $\eta\cdot\eta=0$ (or $k^2/\tau^2$) for an arbitrary parameter $\tau \gg 1$. The construction of CGO solutions dates back to \cite{SyU87} and has extensive developments and applications in inverse problems, which we choose not to expand here and refer the reader to \cite{Uhl14} and the references therein.
One of the advantages of such solutions while applied to corner scattering, is the exponential decaying property of the term $e^{\im \tau x\cdot \eta}$ as $\tau\to\infty$, provided that $\Im \eta\cdot x>\delta>0$ for some constant $\delta$.
Since \cite{BPS14}, there have been several developments on corner scattering concerning the operator $\Delta+k^2\rho$ with $\rho\neq 1$ at the corner. 
The ``always-scattering'' results for convex corner in $\RR^2$ and convex conic corner in $\RR^3$ are established in \cite{PSV17}. Using a different approach, it is {proven in} \cite{ElH18} that concave conic corners in $\RR^2$ and $\RR^3$ and edges in $\RR^3$ also lead to always-scattering. The corresponding results for the source problem concerning $\Delta+k^2$ is considered in \cite{Bla18}. In contrast to the medium scattering problems, harmonic functions, instead of a full CGO solution for $\Delta+q(x)$, suffice for the case of source scattering. A quantitative version of the always scattering result for high curvature points on the boundary is established in \cite{BlaLiu20arX}. Related results concerning different models including the same operator but in the space $\mathbb{H}^n$, the same operator with conductive boundary conditions, similar problems for Maxwell equations and for the Lam\'e system, can be found in \cite{BlaV20,DCH21,LiX17Corner,BLX19arXiv,BlaLin19}, etc.
In \cite{CaX21}, for the first time corner scattering for the operator $\nabla \cdot \gamma\nabla +k^2\rho$ is considered, and corner scattering results are shown for convex corners in $\RR^2$.

In this paper, we generalize the results in \cite{CaX21} and consider corners which can be concave. In order to establish the results for concave corners, we adopt CGO solutions with leading term of the form $e^{z^\alpha/h}$, where $z=x_1+\im x_2$ and $\alpha\in(0,1)$. This type of CGO solutions was suggested by E. Bl{\aa}sten in \cite{Bla18}. However, since the (essential) results in \cite{Bla18} are concerned with the source problem, $e^{z^\alpha/h}$ is solely used as a harmonic function, which suffices for the source problem as mentioned before. Full CGO solutions are not needed or constructed in \cite{Bla18}. 
We apply the d-bar method for the construction of CGO solutions, for the case considered in the current {paper.} 

The rest of the paper is structured as follows. In Section~\ref{sec:2} we represent and discuss our main result, namely Theorem~\ref{thm:main}, and sketch its proof. It is seen from the proof that the essence of Theorem~\ref{thm:main} is Proposition~\ref{prop:mainLocal}, which is associated with a local problem around the corner.  
In Section~\ref{cgo}, CGO solutions to $\nabla \cdot \gamma\nabla w+k^2\rho w=0$ with leading term of the form $e^{z^\alpha/h}$ are constructed. The corresponding $L^{p}$ estimates are established.
Applying the CGO solution, we analyze in Section~\ref{sec:corner} an integral locally near the corner, and eventually give the proof of Proposition~\ref{prop:mainLocal}.

\section{The Main Results}\label{sec:2}
In this section, we shall present our main results. {We first} 
introduce some notations and definitions.

Assume throughout the paper that $\rho\in L^{\infty}(\RR^2;\RR)$ and $\gamma\in L^{\infty}(\RR^2;\mathcal{M}_{2\times 2}(\RR))$ with \eqref{eq:gammEllip} satisfied and $\rho-1$ and  $\gamma-I$ {compactly} 
supported.
Denote $D$ as the bounded open domain such that $\RR^2\setminus\overline{D}$ is the unbounded connected component of $\RR^2\setminus (\supp (\rho-1) \cup  \supp (\gamma-1))$. {In other words, the domain $D$ represents the shape and location of the inhomogeneous scatterer.}
\begin{defn}
	Given $u^{\In}$ an incident field, which satisfies $\Delta u^{\In}+k^2u^{\In}=0$ in $\RR^2$, let $u^{\Sc}=u-u^{\In}$ be the solution to the scattering problem \eqref{eq:MainGov1} and $u^{\infty}$ the corresponding far-field pattern. 
	We say that {$(u^{\In};\gamma,\rho)$ \emph{scatters} if $u^{\infty}$ is not identically zero; otherwise it is  referred to as a pair of \emph{non-scattering wave and medium}}
\end{defn}

For $\theta_0\in(0,\pi)$, denote by $\weg_{\theta_0}$ the open circular sector of aperture $2\theta_0$ as $\weg_{\theta_0}=\{x=r(\cos\theta,\sin\theta)\in\RR^2;\,0<r<\infty,-\pi+\theta_0<\theta<\pi-\theta_0\}$. 
Given $R\in\RR_+$, $\weg_{\theta_0,R}$ is the bounded sector defined by $\weg_{\theta_0,R}=\weg_{\theta_0}\cap B_R$, where $B_R$ is the open ball in $\RR^2$ centered at the origin and of radius $R$. 

\begin{defn}\label{def:corner}
	We say that there is a \emph{corner} $x_0$ for 
	$(\gamma,\rho)$ if, up to a rigid change of coordinates, $x_0$ is the origin and {$D\cap B_R=\weg_{\theta_0,R}$} for some $R\in\RR_+$ and $\theta_0\in(0,\pi)\setminus\{\pi/2\}$. 
	We also refer to $2\theta_0$ as the aperture of the corner and $R$ its size. 
	
	Furthermore, $(\gamma,\rho)$ is said to be \emph{admissible} at the corner if the following two conditions are satisfied in $\weg_{\theta_0,\tilde{\epa}}$ for some constant $\tilde{\epa}\in(0,R)$. 
	First, $\gamma$ is a scalar function\footnote{That is, $\gamma(x)=\tilde{\gamma}(x)I_{2\times 2}$ with $\tilde{\gamma}$ scalar. We still write $\tilde{\gamma}$ as $\gamma$ for simplicity.} in $\weg_{\theta_0,\tilde{\epa}}$ and satisfies $\gamma\in W^{1,\infty}(\weg_{\theta_0,\tilde{\epa}})$ and $\Delta \gamma^{1/2}\in L^{\infty}(\weg_{\theta_0,\tilde{\epa}})$.
	Second, there are constants $c_j\in\RR$ and $C_j,\beta_j\in\RR_+$ such that
	\begin{equation*}
		\abs{\gamma^{-1/2}\pare{\gamma-1}-c_1}\le C_1 |x|^{\beta_1}
		\AnD
		\abs{\gamma^{-1/2}\pare{\rho-1}-c_2}\le C_2 |x|^{\beta_2}
		\quad\mbox{a.e. in $\weg_{\theta_0,\tilde{\epa}}$.}
	\end{equation*}
	In this case, $c_1$ is referred to as the \emph{essential jump} of $\gamma$ at the corner and $c_2$ as that of $\rho$.
	Moreover, if $c_1\neq 0$ we say that $\gamma$ has a (non-trivial) jump at the corner $x_0$; otherwise if $c_1=0$ then there is no jump for $\gamma$, in which case $\beta_1$ is called the {\emph{order of vanishing} for $\gamma-1$} 
	at the corner; and the same terminology applies to $\beta_2$ for {$\rho-1$}.
\end{defn}

The following is the main result of the current paper. It provides the ``corner always scattering'' results in two different situations {depending on whether} the corner appears as a jump of $\gamma$ or that of $\rho$. In the special case when $\rho$ has a jump at the corner whereas {$\gamma-1$ is of vanishing order $2+\beta$ with some $\beta>0$,} 
we show that all incident waves will be scattered{; see, Item (I-a) in the statement of Theorem }\ref{thm:main}. 
{In particular, it generalizes some existing results considering the operator $\Delta+k^2\rho$; see for instance,} \cite{BPS14,PSV17,ElH18}. 
{In the other} cases, the always scattering results are established for incident waves with certain behavior at the corner which may involve, for instance, the (non-)vanishing order of the incident wave and/or its gradient. 
{Moreover, some of these conditions can be shown necessary for the ``always-scattering'' conclusion. We discuss this with more details at the end of this section.}

\begin{thm}\label{thm:main}
	Let $\rho$ and $\gamma$ be as specified at the beginning of this section. Suppose that there is a corner $x_0$ of aperture $2\theta_0$ for $(\gamma,\rho)$ where the latter is admissible. Given an incident field $u^{\In}$ with a wave number $k\in\RR_+$, $(u^{\In};\gamma,\rho)$ is always scattering provided that
	\begin{enumerate}[(I)]
		\item $\rho$ has a jump at $x_0$, while $\gamma$ does not {and}  
		\begin{enumerate}
			\item $\beta_1>2$, {where $\beta_1$ is the vanishing order of $\gamma-1$,} or,
			\item $\beta_1>1$, with $u^{\In}(x_0)\neq 0$, or,
			\item $\beta_1>0$, with $u^{\In}(x_0)\neq 0$ and $\nabla u^{\In}(x_0)= 0$, or,
		\end{enumerate}
		\item $\gamma$ has a jump at $x_0$ and 
		\begin{enumerate}
			\item $u^{\In}(x_0)\neq 0$ and $\nabla u^{\In}(x_0)\neq 0$, 
			or,
			\item $u^{\In}(x_0)=0$, with 
			\\$2\theta_0\neq {l\pi}/{N_0}$  for some $l\in\mathbb{N}_+$ and $N_0$  the vanishing order of $u^{\In}$ at $x_0$, (for example, $2\theta_0\in(0,2\pi)\setminus (\mathbb{Q}\pi)$,)
			or,
			\item $u^{\In}(x_0)\neq 0$ and $\nabla u^{\In}(x_0)= 0$, with 
			\begin{enumerate}
				\item when $2\theta_0\in\{\pi/2,3\pi/2\}$, assuming $c_1\neq c_2$ where $c_1$ and $c_2$ are the essential jumps for, respectively, $\gamma$ and $\rho$ as in Definition~\ref{def:corner}, or,
				\item if $2\theta_0\notin\{\pi/2,3\pi/2\}$, after the change of coordinates as in Definition~\ref{def:corner}, 
			\end{enumerate}
		\begin{equation*}					
			u^{\In}(x)\neq v_0J_0(kr)+c_0v_0J_2(kr)(e^{\im 2\theta}+e^{-\im 2\theta})+\sum_{m=2}^{\infty}J_{2m}(kr)\pare{b_{m,1}e^{\im 2m\theta}+b_{m,2}e^{-\im2m\theta}}
		\end{equation*}
	\begin{enumerate}
		\item[]  	near the corner, where $v_0$ is a non-zero constant,  $c_0=(1-c_2/c_1)/\cos 2\theta_0$ and $J_{m}$ is the Bessel function of the first kind.
	\end{enumerate}
		\end{enumerate}
	\end{enumerate}
\end{thm}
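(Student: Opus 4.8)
The plan is to argue by contradiction, first reducing to a local statement at the corner and then pairing the equations with the CGO solutions of Section~\ref{cgo} to extract the leading behaviour, in $h$, of an integral localized at the corner; the crux is Proposition~\ref{prop:mainLocal}, which Steps~2--3 amount to proving. \emph{Step 1 (Rellich reduction).} I would assume, for contradiction, that $(u^{\In};\gamma,\rho)$ is non-scattering, i.e.\ $u^{\infty}\equiv0$. By Rellich's lemma $u^{\Sc}=u-u^{\In}$ vanishes on $\RR^{2}\setminus\overline D$; after the rigid motion of Definition~\ref{def:corner} placing the corner at the origin with $D\cap B_{R}=\weg_{\theta_0,R}$, this means $u=u^{\In}$ on $B_{R}\setminus\overline D$ and, by the transmission conditions inherent in the divergence-form equation, $u=u^{\In}$ and $\gamma\,\partial_{\nu}u=\partial_{\nu}u^{\In}$ on the two edges of $\weg_{\theta_0}\cap B_{R}$. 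It then suffices to contradict this local configuration.

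\emph{Step 2 (CGO orthogonality).} Fix $\alpha\in(0,1)$ small enough (e.g.\ $\alpha<1/2$) that $\Re(z^{\alpha})=r^{\alpha}\cos(\alpha\theta)>0$ off the corner, where $z=x_{1}+\im x_{2}$ and the branch of $z^{\alpha}$ is cut along $\theta=\pm\pi$. Let $v^{0}_{h}=e^{-z^{\alpha}/h}(1+\psi^{0}_{h})$ be the CGO solution of $\Delta v^{0}_{h}+k^{2}v^{0}_{h}=0$ produced as in Section~\ref{cgo} (the case $\gamma\equiv1$, $\rho\equiv1$). Applying Green's formula to $u$ against $v^{0}_{h}$ on $\weg_{\theta_0,b}$, to $u^{\In}$ against $v^{0}_{h}$ on $B_{b}\setminus\overline{\weg_{\theta_0,b}}$, and using the edge conditions of Step~1 --- the integrals over $|x|=b$ being $O(h^{-N}e^{-\delta/h})$ since $e^{-z^{\alpha}/h}$ and $\nabla e^{-z^{\alpha}/h}=-\tfrac{\alpha}{h}z^{\alpha-1}e^{-z^{\alpha}/h}$ are exponentially small there --- I would be left with
\begin{equation*}
\int_{\weg_{\theta_0,b}}\Pare{(\gamma-1)\,\nabla u\cdot\nabla v^{0}_{h}-k^{2}(\rho-1)\,u\,v^{0}_{h}}\,dx\ =\ O\Pare{h^{-N}e^{-\delta/h}}
\end{equation*}
for some $N,\delta>0$; equivalently, pairing $u$ on the sector with the full CGO of Section~\ref{cgo} for $\nabla\cdot\gamma\nabla+k^{2}\rho$ gives the same information as an identity among edge integrals of $u^{\In}$ and $\partial_{\nu}u^{\In}$. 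Since $\gamma$ is scalar near the corner, the substitution $w=\gamma^{1/2}u$ (so that $\Delta w+qw=0$ with $q=k^{2}\rho/\gamma-\gamma^{-1}\Delta\gamma^{1/2}\in L^{\infty}$, by admissibility), together with $\gamma^{-1/2}(\gamma-1)=c_{1}+O(|x|^{\beta_{1}})$ and $\gamma^{-1/2}(\rho-1)=c_{2}+O(|x|^{\beta_{2}})$, turns this into an identity whose principal part is a combination of $c_{1}\!\int\nabla w\cdot\nabla e^{-z^{\alpha}/h}$, $k^{2}c_{2}\!\int w\,e^{-z^{\alpha}/h}$ and (in the sector version) the edge integrals.

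\emph{Step 3 (leading asymptotics and case analysis).} Near the origin, $w$ --- hence $u$ --- has an expansion whose leading term is a homogeneous harmonic polynomial of degree its vanishing order, and $u=u^{\In}$ on the edges pins this leading term against that of $u^{\In}$. Rescaling $x=h^{1/\alpha}y$ sends $\weg_{\theta_0,b}$ to $\weg_{\theta_0}$ and $z^{\alpha}/h$ to $\zeta^{\alpha}$ ($\zeta=y_{1}+\im y_{2}$), so that for $g$ homogeneous of degree $\lambda$ near $0$,
\begin{equation*}
\int_{\weg_{\theta_0,b}}g(x)\,e^{-z^{\alpha}/h}\,dx\ \sim\ h^{(2+\lambda)/\alpha}\int_{\weg_{\theta_0}}g(y)\,e^{-\zeta^{\alpha}}\,dy ,
\end{equation*}
the model integral converging (as $\Re(\zeta^{\alpha})>0$) and, by a contour rotation, equal to a constant times $\Gamma\!\pare{(2+\lambda)/\alpha}$ times a trigonometric factor in $\theta_0$; the gradient integrals contribute at the lower order $h^{1/\alpha}$. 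Feeding in the expansions of $u,u^{\In},\gamma,\rho$ and the bounds $\nrm{\psi^{0}_{h}}_{L^{p}}\to0$ from Section~\ref{cgo} isolates the principal term of the identity of Step~2, which must then decay faster than any power of $h$; showing it nonzero yields the contradiction. In case (I) ($c_{1}=0\neq c_{2}$) the $(\rho-1)$-term, of order $h^{(2+M_{0})/\alpha}$ with $M_{0}$ the vanishing order of $u$, dominates the $(\gamma-1)$-term exactly when $\beta_{1}>2$, or $\beta_{1}>1$ and $u^{\In}(x_{0})\neq0$, or $\beta_{1}>0$ and $u^{\In}(x_{0})\neq0$, $\nabla u^{\In}(x_{0})=0$; its coefficient carries the factor $\sin(2\pi-2\theta_0)\neq0$ (since $\theta_0\neq\pi/2$) and the nonzero leading coefficient of $u$, so it cannot vanish --- for (I-a), with no hypothesis on $u^{\In}$, one argues in addition, as in \cite{PSV17,ElH18}, that $u$ cannot vanish to infinite order at $x_{0}$ (else $u^{\In}\equiv0$), and pairs with the conjugate CGO $e^{-\bar{z}^{\alpha}/h}$ to rule out the exceptional angular cancellations. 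In case (II) ($c_{1}\neq0$) the $(\gamma-1)$-term enters at the lower order $h^{1/\alpha}$, and the principal relation couples it with the $(\rho-1)$- and edge terms; carrying this through together with the singular (Mellin) expansion of $u$ at the corner forced by the jump of $\gamma$ turns the orthogonality into constraints on the Fourier--Bessel coefficients of $u^{\In}$ near $x_{0}$: for $u^{\In}(x_{0})=0$ of order $N_{0}$ these are unobstructed unless $2\theta_0=l\pi/N_{0}$, and for $u^{\In}(x_{0})\neq0$, $\nabla u^{\In}(x_{0})=0$ they force precisely the Fourier--Bessel normal form displayed in the statement, the $J_{2}$-coefficient being pinned by $c_{0}=(1-c_{2}/c_{1})/\cos2\theta_0$; the degenerate values $2\theta_0\in\{\pi/2,3\pi/2\}$ (where $\cos2\theta_0=0$) are handled by the extra hypothesis $c_{1}\neq c_{2}$, and (II-a) follows because then the generic $h^{1/\alpha}$- and $h^{2/\alpha}$-coefficients cannot vanish simultaneously.

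The routine parts are Steps~1--2 and the Gamma-function evaluation. The hard part will be Step~3 in the subcases (II-b) and (II-c): tracking \emph{all} principal contributions at once --- bulk, edge, and the singular corner expansion of $u$ induced by the jump of $\gamma$ --- and identifying precisely when the resulting trigonometric/Bessel factors degenerate, which is what produces the sharp conditions $2\theta_0\neq l\pi/N_{0}$ and the explicit exceptional Bessel series in (II-c); a secondary point is ensuring, via the $L^{p}$ estimates of Section~\ref{cgo} and the rescaling $x=h^{1/\alpha}y$, that the CGO remainder contributes strictly below the principal order.
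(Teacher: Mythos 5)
Your overall strategy (contradiction, Rellich reduction to the local transmission problem at the corner, pairing with the $e^{-z^{\alpha}/h}$ CGO, Gamma-function asymptotics of the corner integrals, case analysis on the vanishing orders) is the same as the paper's. However, there is a genuine gap in how you set up and analyze the bulk integral. You pair the \emph{total field} $u$ with the free Helmholtz CGO $v^{0}_{h}$, so your orthogonality identity reads $\int_{\weg_{\theta_0,b}}(\gamma-1)\nabla u\cdot\nabla v^{0}_{h}-k^{2}(\rho-1)u\,v^{0}_{h}\,dx=O(h^{-N}e^{-\delta/h})$, and your Step~3 then asserts that ``$u$ has an expansion whose leading term is a homogeneous harmonic polynomial of degree its vanishing order.'' This is false in the situations the theorem addresses: $u$ is only an $H^{1}$ solution of a transmission problem whose coefficients jump across the edges of the sector, and in case (II) (jump of $\gamma$) its corner behavior is governed by non-integer Kondratiev/Mellin exponents, as you yourself acknowledge later. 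The entire extraction of the leading power of $h$, and hence the derivation of the exceptional angles $2\theta_0=l\pi/N_0$ and the Bessel normal form in (II-c), collapses without a controlled expansion of the function appearing in the bulk integral. The paper avoids this by arranging the identity the other way around: the bulk integrand is $(\gamma-1)\nabla v\cdot\nabla w-k^{2}(\rho-1)vw$ with $v=u^{\In}$ (real-analytic near the corner precisely because of the extension hypothesis being contradicted) and $w$ the \emph{full} CGO for $\nabla\cdot\gamma\nabla+k^{2}\rho$ obtained via the Liouville substitution $W=\gamma^{1/2}w$ (Corollary~\ref{cor:CGO}); only the Taylor series of $v$ is ever needed, and the admissibility conditions $\gamma^{-1/2}(\gamma-1)=c_1+O(|x|^{\beta_1})$, $\gamma^{-1/2}(\rho-1)=c_2+O(|x|^{\beta_2})$ enter as weights, not through the regularity of $u$.

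A second, smaller gap: the sharp nondegeneracy statements that actually produce the conclusions --- which combinations of $\theta_0$, $N_0$, $N$, $c_1$, $c_2$ make the leading coefficients vanish for \emph{both} $\Phi$ and $\overline{\Phi}$ --- are asserted rather than computed. In the paper these are Lemmas~\ref{lem:DecayExactv}--\ref{lem:DecayExact3}: e.g.\ simultaneous vanishing in the zeroth-order case forces $\sin\theta_0=\pm\sin((N_0+1)\theta_0)/(N_0+1)$ (impossible for $\theta_0\neq\pi/2$), not the factor $\sin(2\pi-2\theta_0)$ you cite; and the (II-c) normal form comes from an explicit computation with the Hessian matrix $B$ of $v_2$, yielding $b_{12}=0$ and $b_{11},b_{22}$ pinned by $c_0=(1-c_2/c_1)/\cos 2\theta_0$. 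Using both $w$ and $\overline{w}$ is essential here, and you do flag this, but without these computations the case analysis in (II-b) and (II-c) is not established.
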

{Before the proof of Theorem}~\ref{thm:main}, we first state the existence and uniqueness of solutions in $ H^1_{loc}(\RR^2)$ to the scattering problem \eqref{eq:MainGov1}. 
{We refer the readers to} \cite[Section 1.4]{CCH16} for a proof of this result. Note that in order to apply the arguments as in \cite{CCH16} for the coefficient $\gamma$ with $L^{\infty}$ regularity, we need to use the unique continuation principle from \cite{Ale12UCP}.
\begin{lem}
{Let $\rho\in L^{\infty}(\RR^2;\RR)$ and $\gamma\in L^{\infty}(\RR^2;\mathcal{M}_{2\times 2}(\RR))$ satisfy}  \eqref{eq:gammEllip} with $\rho-1$ and  $\gamma-I$ compactly supported.
Given an incident field $u^{\In}$, there is a unique solution $u\in H^1_{loc}(\RR^2)$ to \eqref{eq:MainGov1}.
\end{lem}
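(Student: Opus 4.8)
The plan is to follow the classical variational treatment of medium scattering, as in \cite[Section 1.4]{CCH16}, and to isolate the one place where the low regularity of $\gamma$ matters. First I would pass to an equation for the scattered field alone: since $\Delta u^{\In}+k^2u^{\In}=0$, the function $u^{\Sc}=u-u^{\In}$ must solve
\begin{equation*}
	\nabla\cdot\gamma\nabla u^{\Sc}+k^2\rho u^{\Sc}=-\nabla\cdot(\gamma-I)\nabla u^{\In}-k^2(\rho-1)u^{\In}=:f\quad\text{in }\RR^2,
\end{equation*}
together with the Sommerfeld radiation condition, where $f\in H^{-1}(\RR^2)$ is supported in $\overline{D}$ (here $u^{\In}$ is smooth). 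So it suffices to show that this inhomogeneous radiating problem has a unique solution $u^{\Sc}\in H^1_{loc}(\RR^2)$, and then set $u=u^{\Sc}+u^{\In}$.

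Next I would confine the problem to a large ball $B_R\supset\overline{D}$. Outside $B_R$ the equation is the homogeneous Helmholtz equation, so the radiation condition and an expansion in outgoing Hankel functions produce the exterior Dirichlet-to-Neumann operator $\Lambda_k\colon H^{1/2}(\partial B_R)\to H^{-1/2}(\partial B_R)$, and the problem is equivalent to finding $u^{\Sc}\in H^1(B_R)$ with $\partial_\nu u^{\Sc}=\Lambda_k\big(u^{\Sc}\big|_{\partial B_R}\big)$ and, for all $v\in H^1(B_R)$,
\begin{equation*}
	\int_{B_R}\pare{\gamma\nabla u^{\Sc}\cdot\nabla\bar v-k^2\rho\,u^{\Sc}\bar v}\,dx-\int_{\partial B_R}\pare{\Lambda_k u^{\Sc}}\bar v\,ds=-\langle f,v\rangle.
\end{equation*}
The sesquilinear form on the left is bounded on $H^1(B_R)$; by \eqref{eq:gammEllip} the term $\int_{B_R}\gamma\nabla u\cdot\nabla\bar v$ controls $\|\nabla u\|_{L^2}^2$, the $k=0$ boundary term $-\langle\Lambda_0 u,u\rangle$ is nonnegative, and adding and subtracting $\int_{B_R}u\bar v$ one writes the form as a coercive part plus the remainder $-k^2\int_{B_R}\rho u\bar v-\int_{B_R}u\bar v-\langle(\Lambda_k-\Lambda_0)u,v\rangle$, which defines a compact operator on $H^1(B_R)$ (the volume terms by Rellich--Kondrachov, the boundary term because $\Lambda_k-\Lambda_0$ is smoothing). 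Thus the problem is Fredholm of index zero, and no regularity of $\gamma$ beyond $L^\infty$-ellipticity is used so far; it remains only to prove uniqueness.

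For uniqueness, suppose $u^{\Sc}\in H^1_{loc}(\RR^2)$ is a radiating solution with $f=0$. Multiplying the equation by $\overline{u^{\Sc}}$, integrating over $B_{R'}$ with $R'\gg R$, integrating by parts, and taking imaginary parts, the fact that $\gamma$ is real symmetric and $\rho$ is real forces $\Im\int_{\partial B_{R'}}\overline{u^{\Sc}}\,\partial_\nu u^{\Sc}\,ds=0$; a standard computation combining this with the radiation condition gives $u^\infty\equiv0$, so by Rellich's lemma $u^{\Sc}\equiv 0$ in the unbounded component $\RR^2\setminus\overline{D}$. Hence $u^{\Sc}$ is a global $H^1_{loc}(\RR^2)$ solution of $\nabla\cdot\gamma\nabla u^{\Sc}+k^2\rho u^{\Sc}=0$ that vanishes on a nonempty open set. \emph{This is the only non-routine point:} for divergence-form elliptic operators in $\RR^2$ with merely $L^\infty$ elliptic leading coefficient and an $L^\infty$ zeroth-order term, one invokes the strong unique continuation principle of \cite{Ale12UCP} to conclude $u^{\Sc}\equiv0$ in $\RR^2$. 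By the Fredholm alternative, existence then follows for every $u^{\In}$, and $u=u^{\Sc}+u^{\In}\in H^1_{loc}(\RR^2)$ is the unique solution of \eqref{eq:MainGov1}. I expect the unique continuation step to be the genuine obstacle, precisely because the argument in \cite{CCH16} is written for more regular coefficients and must here be replaced by the two-dimensional result of \cite{Ale12UCP}; everything else is the standard variational/Fredholm machinery.
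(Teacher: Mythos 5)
Your proposal is correct and follows exactly the route the paper intends: the paper's ``proof'' is simply a citation of the variational/Fredholm argument in \cite[Section 1.4]{CCH16} together with the observation that, for merely $L^\infty$-elliptic $\gamma$, the unique continuation step must be supplied by the two-dimensional result of \cite{Ale12UCP}. You have correctly identified that unique continuation is the only non-routine point, so your write-up matches the paper's argument in both structure and substance.
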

\begin{proof}[Proof of Theorem~\ref{thm:main} (Sketch)]
Theorem~\ref{thm:main} is proven by contradiction. Assume that an incident field $u^{\In}$ with wavenumber $k\in\RR_+$ is not scattered, that is, $u^{\infty}\equiv 0$. 
Then by Rellich's lemma, $u^{\Sc}\equiv 0$ in $\RR^2\setminus\overline{\Omega}$, where $\Omega$ is any open bounded Lipschitz domain such that $\supp(\rho-1)\cup\supp(\gamma-I)\subseteq \overline{\Omega}$ and that $\RR^2\setminus\overline{\Omega}$ is connected. 
Then we obtain the following interior transmission eigenvalue problem 
\begin{equation}\label{eq:ITEP}
	\begin{split}
		\nabla\cdot \gamma \nabla u + k^2 \rho u=0,\quad  \Delta u^{\In} + k^2 u^{\In}=0,&\qquad\mbox{in $\Omega$},\\
		u=u^{\In},\quad  \partial_{\nu}^{\gamma} u=\partial_{\nu}u^{\In},&\qquad\mbox{on $\partial\Omega$},
	\end{split}
\end{equation}
where $\nu$ is the outward normal vector of $\partial \Omega$ and $\partial_{\nu}^{\gamma}=\nu^T\gamma \nabla$.
We shall take such a domain $\Omega$ which satisfies that, up to a rigid change of coordinates,  
$\Omega\cap B_{\tilde{\epa}}=\weg_{\theta_0,\tilde{\epa}}=D\cap B_{\tilde{\epa}}$. 
In particular, $u$ and $v:=u^{\In}$ satisfies the following local problem at the corner
\begin{equation}\label{eq:ITEPlocal}
	\begin{split}
		\nabla\cdot \gamma \nabla u + k^2 \rho u=0,\quad  \Delta v + k^2 v=0,&\qquad\mbox{in $\weg_{\theta_0,\tilde{\epa}}$},\\
		u=v,\quad  \partial_{\nu}^{\gamma} u=\partial_{\nu}v,&\qquad\mbox{on $\partial\weg_{\theta_0}\cap B_{\tilde{\epa}}$},
	\end{split}
\end{equation}
where $\gamma$ is a scalar function in $B_{\tilde{\epa}}$.
Theorem~\ref{thm:main} is then a direct consequence of the following proposition, which states that as a nontrivial solution to \eqref{eq:ITEPlocal}, $v$ can not be extended into $B_{\tilde{\epa}}$ as a solution to the Helmholtz equation, under the assumptions of Theorem~\ref{thm:main} with $v=u^{\In}$.
\begin{prop}\label{prop:mainLocal}
	Let $\rho$, $\gamma$ be the same as those in Theorem~\ref{thm:main} with a corner where $(\gamma,\rho)$ is admissible.
	Given $(u,v)$ a non-trivial solution to \eqref{eq:ITEPlocal} (after the change of coordinates as in Definition~\ref{def:corner}), 
	suppose {that $u\in H^1(\weg_{\theta_0,\tilde{\epa}})$ and that $v$ can} be extended into $B_{\tilde{\epa}}$, as a solution to $\Delta v+k^2v=0$. Then none of the conditions for $\rho$, $\gamma$, $\theta_0$ and $u^{\In}=u^{\In}|_{B_{a}}:=v$ in Items (I)-(II) from Theorem~\ref{thm:main} can be valid. 
\end{prop}
	
\end{proof}
The proof of {Proposition}~\ref{prop:mainLocal} is given in Section~\ref{sec:proofProp}. 
We would like to give some remarks on Theorem~\ref{thm:main} and {Proposition}~\ref{prop:mainLocal} for the remainder of this section.

Firstly, it is seen from Proposition~\ref{prop:mainLocal} that, Theorem~\ref{thm:main} is also valid for point source type of incident waves, as long as the source does not locate at the corner. In fact, all the conclusions hold for any ``local incident fields'' which satisfies the Helmholtz equation in a neighborhood of the corner{, providing that the solution is in $H^1$}.

{Secondly, applying Proposition}~\ref{prop:mainLocal} {we can generalize the result on the ``non-uniform approximation'' of interior transmission eigenfunctions as in} \cite{CaX21}.
More precisely, consider the interior transmission eigenvalue problem \eqref{eq:ITEP}. Assume that there is a (convex or concave) corner {on} the boundary of $\Omega$ where $\gamma$ and $\rho$ are admissible in the sense of Definition~\ref{def:corner}. Then we can conclude that, whenever an eigenfunction $v$ is approximated by the Herglotz functions
\begin{equation*}
	v_{\varepsilon}(x):=\int_{{\mathbb S}^{1}}g_{\varepsilon}(d) e^{ikx\cdot d}ds(d),\qquad g_{\varepsilon}\in L^2({\mathbb S}^{n-1})
\end{equation*}
as $\varepsilon\to 0+$ in the $H^1(\Omega)$ norm, we must have that 
$\lim\sup\|g_{\varepsilon}\|_{L^2({\mathbb S}^1)}=\infty$, where the limit is taken as $\varepsilon\to 0+$.

{Lastly}, the ``always scattering'' or the ``non-extendability'' results in Theorem~\ref{thm:main} and {Proposition}~\ref{prop:mainLocal} are established with certain types of waves or solutions excluded, especially when there is a jump for $\gamma$ at the corner. 
{In fact, some of these exclusions are necessary. In other words, we can construct non-scattering waves and media which belong to these excluded classes. More specifically, let us consider the special case when $\gamma=a_0=\rho$ in the medium $D$ with $a_0$ a positive constant different from $1$. 
	Assume that $u^{\In}$ is a non-scattering incident wave with the wavenumber $k$, and $u$ is the corresponding total wave. Then we observe that $w_D=u-u^{\In}$ is a Dirichlet eigenfunction in $D$ and $w_N=a_0u-u^{\In}$ is a Neumann eigenfunction corresponding to the same eigenvalue $k^2$.  Moreover, one of these two ``eigenfunctions'' can be ``degenerated''; that is, $w_D$ or $w_N$ is allowed to be identically zero. 
In this case, we can interpret the existence of non-scattering wave numbers as, whether there are such eigenfunctions with $a_0w_D-w_N$ extendable to the exterior of the corner and remain as a solution to the Helmholtz equation.
For example, if $D$ is a rectangle, say, $D=(0,\pi)^2$, then we can find infinite number of non-scattering wavenumbers $k=\sqrt{k_1^2+k_2^2}$ with $k_1,k_2\in\mathbb{N}$, where (some of) the corresponding non-scattering incident waves are $a_1\sin(k_1x_1)\sin(k_2x_2)+a_2\cos(k_1x_1)\cos(k_2x_2)$.
This shows the necessity of the extra conditions Item (II-c-i) and partly in Item (II-b), from Theorem} \ref{thm:main}, {for the ``always scattering'' conclusion. Nonscattering waves for corners with other apertures regarding Item (II-c-i) can be constructed by taking $w_N$ trivial. 
In a ongoing work, we expect to establish further the existence or construction of non-scattering waves and media in a more general setting.
} 

The rest of this paper is devoted to the proof of Proposition~\ref{prop:mainLocal}.

\section{Construction of CGO Solutions}\label{cgo}


Let $G$ be an open and simple connected Lipschitz domain that is a subset of (or equal to) $\weg_{\theta_1,R_1}$ for some $\theta_1\in(0,\pi)$ and $R_1\in\RR_+$. 
In this section, we shall construct solutions to
\begin{equation*}
\Delta W + qW=0\qquad \text{in} \quad G
\end{equation*}
which are of the form
\begin{equation}\label{eq:CGOform}
W=e^{-\Phi/h}  W_h
\qquad\text{with}
\quad
W_h(x)= A(x)+w_h(x)\AnD\Phi(x)=r^{\alpha}e^{\im\alpha\theta }.
\end{equation}
Hereinafter, the parameter $\alpha\in(0,1)$, $h$ is an arbitrary (and sufficiently small) positive constant, 
 and $(r,\theta)$ is the polar coordinates of $x$ with $\theta\in(-\pi,\pi)$.
 We shall prove that
\begin{prop}\label{prop:CGO1}
	Let $q\in L^{\infty}(G)$, $A\in L^{p_0}(G)$ with $A$ a holomorphic function in $G$ and $p_0$ a constant with $2/\alpha\le p_0\le\infty$. Then there is a constant $h_0\in(0,1)$ such that for any $h\in(0,h_0)$, there exists a solution $W$ to $\Delta W + qW=0$ in $G$ which is of the form \eqref{eq:CGOform}. Moreover, for any $p_1\in(1,p_0)$ and any $p_2\in (1,2/\alpha)$, we have
	\begin{equation}\label{eq:CGOresid}
	\|w_h\|_{L^{p_1}}\le C\text{\hl{$h$}}\|A\|_{L^{p_1}}
	\AND
	\|w_h\|_{W^{1,p_2}}\le C\|A\|_{L^{p_2}}
	\end{equation}
	with some constant $C$ independent of $h$ (depending on $p_1$, $p_2$, $h_0$, $\alpha$, $G$ and $\|q\|_{L^\infty}$).
\end{prop}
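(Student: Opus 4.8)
The strategy is the classical d-bar / Faddeev-type construction, adapted to the non-standard phase $\Phi(x)=r^\alpha e^{\im\alpha\theta}=z^\alpha$, which is holomorphic on $G$ (since $G$ avoids the negative real axis, lying inside a sector $\weg_{\theta_1,R_1}$ with $\theta_1\in(0,\pi)$, so $z^\alpha$ has a well-defined branch). Writing $W=e^{-\Phi/h}(A+w_h)$ and using $\Delta=4\partial_z\partial_{\bar z}$, the equation $\Delta W+qW=0$ becomes, after conjugating by $e^{-\Phi/h}$ and using that $A$ and $\Phi$ are holomorphic (so $\partial_{\bar z}A=0$ and $\Delta\Phi=0$), an equation of the form
\begin{equation*}
4\,\partial_{\bar z}\Pare{\partial_z w_h-\tfrac{1}{h}\Phi'\,w_h}=-q(A+w_h)\qquad\text{in }G,
\end{equation*}
where $\Phi'(z)=\alpha z^{\alpha-1}$. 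The key observation is that $\partial_z-\tfrac1h\Phi' = e^{\Phi/h}\partial_z e^{-\Phi/h}$, so setting $v_h := e^{\Phi/h}\partial_z(e^{-\Phi/h}w_h)$ — or more cleanly absorbing the conjugation — one wants to invert the operator $\db\circ(\text{twisted }\partial_z)$. The plan is to introduce the solid Cauchy transform $\overline\partial^{-1}$ on $G$ (extend by zero, convolve with $1/(\pi z)$) together with the ``twisted'' antiholomorphic-companion operator that inverts $\partial_z - \tfrac1h\Phi'$; composing these yields an integral operator $T_h$ such that $w_h = -T_h(q(A+w_h))$, i.e. $(\Id+T_h q)w_h = -T_h(qA)$.

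\emph{Key steps, in order.} First I would record the algebraic reduction above and define precisely the two integral operators: the standard Cauchy transform $P f(z)=\frac1\pi\int_G \frac{f(\zeta)}{z-\zeta}\,dA(\zeta)$, and the twisted solver $S_h g(z) = e^{\Phi(z)/h}\,\frac1\pi\int_G \frac{e^{-\Phi(\zeta)/h}g(\zeta)}{z-\zeta}\,dA(\zeta)$, so that $(\partial_z-\tfrac1h\Phi')S_h g = g$. Set $T_h = S_h\circ P$ (in the appropriate order so that $\Delta(e^{-\Phi/h}T_h f)=e^{-\Phi/h}f$ up to the holomorphic bookkeeping). Second, I would establish the mapping properties: $P:L^p(G)\to W^{1,p}(G)$ for $1<p<\infty$ and $P:L^p(G)\to L^{p^*}(G)$ with gain of exponent (Hardy–Littlewood–Sobolev / Calderón–Zygmund on the bounded domain $G$), and crucially a gain of a negative power of $h$ for $S_h$: because the weight $e^{-\Re\Phi/h}=e^{-r^\alpha\cos(\alpha\theta)/h}$ is integrable with norm $\sim h^{2/\alpha}$ near the corner (here is where $p_0\ge 2/\alpha$ and $p_2<2/\alpha$ enter — they are exactly the thresholds making the relevant $\|e^{-\Phi/h}\|_{L^s}$-type factors finite and giving the right power of $h$), one gets $\|S_h g\|_{L^{p_1}}\le C\|g\|_{L^{p_1}}$ and, combined with the smoothing of $P$, bounds of the form $\|T_h(qf)\|_{L^{p_1}}\le Ch^{-1}\|q\|_\infty\|f\|_{L^{p_1}}$ — wait, one must be careful: the $h^{-1}$ in the statement should come from a single application, while for the Neumann series one needs the operator norm of $T_h q$ to be $o(1)$. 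Third, I would verify that $\|T_h q\|_{L^{p_1}\to L^{p_1}} \to 0$ as $h\to 0$: the smoothing factor from $P$ converts an $L^{p_1}$ bound into a better space, and then $S_h$ with its small weight contributes a positive power of $h$; the net is a bound $Ch^{\kappa}$ for some $\kappa>0$. Hence for $h<h_0$ small, $\Id+T_h q$ is invertible on $L^{p_1}(G)$ by Neumann series, yielding $w_h=-(\Id+T_hq)^{-1}T_h(qA)$ with $\|w_h\|_{L^{p_1}}\le 2\|T_h(qA)\|_{L^{p_1}}\le C h^{-1}\|A\|_{L^{p_1}}$ — the $h^{-1}$ being a crude but sufficient bound on the single operator $T_h$ acting on $qA$. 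Fourth, for the $W^{1,p_2}$ estimate with $p_2<2/\alpha$: from $w_h=-T_h(q(A+w_h))$ and $\partial_z w_h = \tfrac1h\Phi' w_h + P(\cdots)$... actually the cleaner route is $\db w_h = P$-side gives $\db w_h\in L^{p_2}$ directly and $\partial_z w_h$ is controlled by noting $\nabla w_h = \nabla(S_h P(\cdots))$ where the $e^{\Phi/h}$ and $\Phi'=\alpha z^{\alpha-1}$ factors, despite blowing up like $h^{-1}$ and $r^{\alpha-1}$, are tamed because $p_2<2/\alpha$ keeps $r^{\alpha-1}\in L^{p_2}_{loc}$ and the weight again supplies a power of $h$ that cancels the $h^{-1}$; the upshot is $\|w_h\|_{W^{1,p_2}}\le C\|A\|_{L^{p_2}}$ with $C$ independent of $h$. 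Finally, I would check that the resulting $W=e^{-\Phi/h}(A+w_h)$ genuinely solves $\Delta W+qW=0$ in $G$ in the distributional sense, which follows by construction once $w_h\in W^{1,p_2}_{loc}$ so the formal manipulations are justified.

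\emph{Main obstacle.} The heart of the matter is the uniform-in-$h$ analysis of the twisted Cauchy transform $S_h$ near the corner, and in particular pinning down the exact exponents. Unlike the classical CGO case where the phase is linear ($e^{\im\tau z}$) and one has translation invariance, here $\Phi=z^\alpha$ degenerates at the origin: $\Phi'(z)=\alpha z^{\alpha-1}$ is singular (for $\alpha<1$) and the level sets of $\Re\Phi$ are curved, so one cannot simply Fourier-transform. The delicate estimate is controlling $\int_G |z-\zeta|^{-1} e^{-(\Re\Phi(z)-\Re\Phi(\zeta))/h}\,dA(\zeta)$ uniformly, which requires splitting into the region where $\Re\Phi(\zeta)\ge\Re\Phi(z)$ (weight $\le 1$, handled by the weak-Young/HLS bound for $|z|^{-1}$ on a bounded set) and its complement, and tracking how the measure of the sublevel sets $\{\Re\Phi<\lambda\}=\{r^\alpha\cos\alpha\theta<\lambda\}$ scales like $\lambda^{2/\alpha}$ — this scaling is precisely why the thresholds $2/\alpha\le p_0$ and $p_2<2/\alpha$ appear, and getting the bookkeeping of $h$-powers exactly right (so that $\|w_h\|_{L^{p_1}}=O(h^{-1})$ and $\|w_h\|_{W^{1,p_2}}=O(1)$, not worse) is the technically demanding part of the proof.
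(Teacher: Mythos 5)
Your overall architecture (conjugate by $e^{-\Phi/h}$, reduce to an integral equation $w_h=-T_h(q(A+w_h))$, invert by Neumann series, then prove separate $L^{p_1}$ and $W^{1,p_2}$ bounds) matches the paper, and your diagnosis of where the difficulty sits (the degenerate phase at the corner, the thresholds $2/\alpha$) is on target. But the central analytic step is wrong as proposed. Your twisted solver $S_h g=e^{\Phi/h}\,\mathcal{C}(e^{-\Phi/h}g)$ (with $\mathcal{C}$ a solid Cauchy transform) carries the kernel weight $e^{(\Phi(z)-\Phi(\zeta))/h}$, whose modulus $e^{(\Re\Phi(z)-\Re\Phi(\zeta))/h}$ is exponentially large in $1/h$ on the half of $G\times G$ where $\Re\Phi(z)>\Re\Phi(\zeta)$. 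The integrability of $e^{-\Re\Phi/h}$ and the $\lambda^{2/\alpha}$ scaling of the sublevel sets cannot rescue this: no measure-theoretic bound beats a factor $e^{c/h}$, and on the good half the weight equals $1$ at $\zeta=z$, so it yields no smallness in $h$ either. Hence the claimed uniform bound $\|S_hg\|_{L^{p_1}}\le C\|g\|_{L^{p_1}}$ fails, and with it both the Neumann-series step and the $O(h^{-1})$ (in fact $O(h)$) estimate for $w_h$.

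The paper's construction avoids $\Re\Phi$ in the conjugation altogether. Since $\Phi$ is holomorphic, $\db W=e^{-\Phi/h}\db W_h$ with no twist, and then one pulls out $e^{-\overline{\Phi}/h}$ (not $e^{-\Phi/h}$) from the outer $\dd$, so that the equation becomes $4\,\dd\bigl(e^{-2\im\imP/h}\db w_h\bigr)=-e^{-2\im\imP/h}q(A+w_h)$ with $\imP=\Im\Phi$: the only weight that ever appears is the \emph{unimodular} factor $e^{\pm 2\im\imP/h}$. The gain of $h$ then comes not from decay but from a Bukhgeim-type stationary-phase integration by parts: using $\db\bigl(e^{2\im\imP/h}\tfrac{f}{2\db\imP}\bigr)=\tfrac{\im}{h}e^{2\im\imP/h}f+e^{2\im\imP/h}\db\tfrac{f}{2\db\imP}$, the operator $\Sb$ inverts $\db$ against the oscillation at the cost of one derivative of $f$ and of the singular factor $\db(\db\imP)^{-1}\sim r^{-\alpha}$ — this (namely $r^{-\alpha}\in L^p$ iff $p<2/\alpha$), rather than sublevel-set measures, is where the threshold $2/\alpha$ actually enters. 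The lost derivative is supplied by first applying the smoothing operator $\Sa f=\idd(e^{-2\im\imP/h}f):L^p\to W^{1,p}$, and the explicit prefactor $h$ in $\So=\im\tfrac{h}{4}\Sb\Sa(q\,\cdot)$ then gives $\|\So\|_{L^p\to L^p}=O(h)$, which is what makes the Neumann series converge. To repair your proof you would need to replace your $S_h$ by this mechanism (or an equivalent one); as written, the key operator bound does not hold.
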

We postpone the proof of Proposition~\ref{prop:CGO1} to the end of this section. It is noted that the regularity assumption $q\in L^{\infty}(G)$ can be relaxed by using approximate cut-off arguments similar to those developed in \cite{Bukh08} and \cite{BTW20}. We choose not to present the more refined result in this paper to simplify the exposition.
As a direct consequence of Proposition~\ref{prop:CGO1}, we have
\begin{cor}\label{cor:CGO}
	Let $\alpha\in(0,1]$, $p_0\in[2/\alpha,\infty)$ and $A\in L^{p_0}(G)$ be the same as in Proposition~\ref{prop:CGO1}. 
	Let $\gamma$ and $\rho$ be functions defined on $G$ with $\gamma$ positive and $q:=k^2\rho\gamma^{-1}-\gamma^{-1/2}\Delta \gamma^{1/2}\in L^{\infty}(G)$.
	Then there is a constant $h_0\in(0,1)$ such that for any $h\in(0,h_0)$, there exists a solution $w$ to \begin{equation*}
	\nabla\cdot \gamma \nabla w + k^2 \rho w=0\quad\mbox{in $G$}
	\end{equation*}
	of the form 
	\begin{equation}\label{eq:CGO}
	w=\gamma^{-1/2}(A+w_h(x))e^{-\Phi/h}
	\qquad\text{with}
	\quad
	\Phi(x)=r^{\alpha}e^{\im\alpha\theta },~\theta\in(-\pi,\pi),
	\end{equation}
	where $w_h$ satisfies the same estimates as in Proposition~\ref{prop:CGO1}.
\end{cor}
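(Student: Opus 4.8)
The plan is to reduce the conductivity-type equation $\nabla\cdot\gamma\nabla w+k^2\rho w=0$ to a Schr\"odinger-type equation $\Delta W+qW=0$ by the classical Liouville substitution $w=\gamma^{-1/2}W$, and then invoke Proposition~\ref{prop:CGO1} directly. First I would verify the algebraic identity: if $\gamma>0$ is (sufficiently) smooth and scalar, then for $w=\gamma^{-1/2}W$ one has
\begin{equation*}
	\gamma^{1/2}\,\gamma^{-1}\!\left(\nabla\cdot\gamma\nabla w+k^2\rho w\right)
	=\Delta W+\left(k^2\rho\gamma^{-1}-\gamma^{-1/2}\Delta\gamma^{1/2}\right)W,
\end{equation*}
so that, with $q:=k^2\rho\gamma^{-1}-\gamma^{-1/2}\Delta\gamma^{1/2}$, the equation $\nabla\cdot\gamma\nabla w+k^2\rho w=0$ in $G$ is equivalent to $\Delta W+qW=0$ in $G$. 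This computation is routine (expand $\nabla\cdot\gamma\nabla(\gamma^{-1/2}W)$ using the product rule, collect the terms multiplying $W$, $\nabla W$, and $\Delta W$; the $\nabla W$ terms cancel because $\gamma\nabla\gamma^{-1/2}=-\tfrac12\gamma^{-1/2}\nabla\gamma$ pairs against $\nabla\gamma^{1/2}\cdot\gamma^{-1/2}$ appropriately), and it is valid as an identity of distributions under the stated hypothesis $q\in L^\infty(G)$ together with enough regularity on $\gamma$ to make sense of $\Delta\gamma^{1/2}$ — exactly the admissibility-type assumptions under which the corollary will be applied.

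With this reduction in hand, the second step is immediate: since $q\in L^\infty(G)$ and $A\in L^{p_0}(G)$ is holomorphic with $2/\alpha\le p_0\le\infty$ and $\alpha\in(0,1]$ (the endpoint $\alpha=1$ is allowed here but only the range $(0,1)$ is used in Proposition~\ref{prop:CGO1}; for $\alpha=1$ the same construction applies verbatim, or one simply restricts to $\alpha<1$ for the application), Proposition~\ref{prop:CGO1} furnishes a constant $h_0\in(0,1)$ and, for every $h\in(0,h_0)$, a solution $W=e^{-\Phi/h}(A+w_h)$ of $\Delta W+qW=0$ in $G$ with $\Phi(x)=r^\alpha e^{\im\alpha\theta}$, $\theta\in(-\pi,\pi)$, and with $w_h$ obeying the estimates \eqref{eq:CGOresid}. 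Setting $w:=\gamma^{-1/2}W=\gamma^{-1/2}(A+w_h)e^{-\Phi/h}$ then gives a solution of $\nabla\cdot\gamma\nabla w+k^2\rho w=0$ of the form \eqref{eq:CGO}, and the estimates on $w_h$ are inherited unchanged because they are statements about $w_h$ itself, not about $w$.

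I do not expect a genuine obstacle here — the corollary is a direct transcription of Proposition~\ref{prop:CGO1} through a change of unknown. The only point requiring a modicum of care is justifying the Liouville identity at the stated (low) regularity of $\gamma$: one should note that $\gamma\in W^{1,\infty}$ together with $\Delta\gamma^{1/2}\in L^\infty$ (which is how admissibility is phrased in Definition~\ref{def:corner}) is exactly what makes every term in the identity a well-defined $L^\infty$, hence $L^1_{loc}$, function, so the equivalence holds in $\mathcal D'(G)$; and that $w\in H^1_{loc}(G)$ iff $W\in H^1_{loc}(G)$ since $\gamma^{\pm1/2}\in W^{1,\infty}(G)$. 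Beyond that, the proof is two lines once the substitution is recorded.
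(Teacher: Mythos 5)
Your proposal is correct and follows exactly the paper's own argument: the Liouville substitution $W=\gamma^{1/2}w$ reduces the divergence-form equation to $\Delta W+qW=0$ with $q=k^2\rho\gamma^{-1}-\gamma^{-1/2}\Delta\gamma^{1/2}$, after which Proposition~\ref{prop:CGO1} applies directly. Your additional remarks on the regularity needed to justify the identity distributionally are a sensible elaboration of what the paper leaves implicit.
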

\begin{proof}
	Let $W=\gamma^{1/2}w$. Then $W$ is of the form \eqref{eq:CGOform} and satisfies $\Delta W+qW=0$ with $q=k^2\rho\gamma^{-1}-\gamma^{-1/2}\Delta \gamma^{1/2}$. 
	The proof is then complete by applying Proposition~\ref{prop:CGO1}.
\end{proof} 
For the proof of Proposition~\ref{prop:CGO1}, we first identify the complex domain $\CC$ with the real domain $\RR^2$ by $z=x_1+\im x_2\in\CC$ for $x=(x_1,x_2)\in\RR^2$.
We shall make use of the $\db$ and $\dd$ operators defined by $\db=\frac{1}{2}\pare{\partial_{x_1}+\im \partial_{x_2}}$ and $\dd=\frac{1}{2}\pare{\partial_{x_1}-\im \partial_{x_2}}$.
Then $\Delta=4\db\dd=4\dd\db$.
Notice that 
\begin{equation*}
	\Phi(z)=|z|^\alpha \pare{\cos(\alpha\arg z)+\im\sin(\alpha\arg z)},
\end{equation*}
which is (a branch of $z^{\alpha}$ and) uniquely defined 
and holomorphic in $\CC\setminus\{[0,\infty)\}$ where $\theta=\arg z$ is taken in $(-\pi,\pi)$
; see also, \cite{Bla18}.

Let $\idb$ and $\idd$ be the Cauchy operators defined by
\begin{equation*}
	\idb f=\idb_G f=\frac{1}{\pi}\int_{G}\frac{f(\zeta)}{z-\zeta}d\vec{\zeta}\AND \idd f=\idd_G f=\frac{1}{\pi}\int_{G}\frac{f(\zeta)}{\overline{z}-\zeta}d\vec{\zeta},
\end{equation*}
for any $f\in L^1(G)$ (or in general for any distribution $f\in \adst(G)$), 
where $d\vec{\zeta}$ is understood as $dx_1dx_2$ with $\zeta=x_1+\im x_2$.
Then $\idb$ is a right inverse of $\db$, and $\idd$ is a right inverse of $\dd$, in the sense that (see, \cite{Vekua62}) 
\begin{equation}\label{eq:dbinv}
	\db\idb f = f\AND 
	\dd\idd  f= f,
	\qquad
	f\in \adst(G).
\end{equation}
The following mapping properties are known (see e.g., \cite{Vekua62,BTW20}).
\begin{lem}\label{lem:dbinvMap}
	The operators $\idb$ and $\idd$ are bounded from $L^p(G)$ to $W^{1,p}(G)$ with any $p\in(1,\infty)$.
\end{lem}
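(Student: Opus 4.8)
The plan is to reduce the statement to two classical facts: the $L^1$‑convolution bound for the Cauchy kernel, and the $L^p$‑boundedness of the Beurling–Ahlfors transform. Throughout, since $G$ is bounded, fix $R>0$ with $G\subseteq B_R$, and for $f\in L^p(G)$ extend $f$ by zero to $\CC$. First I would estimate the operators themselves: for $z,\zeta\in G$ one has $|z-\zeta|<2R$ and $|\overline z-\zeta|=|z-\overline\zeta|<2R$, so
\begin{equation*}
	|\idb f(z)|+|\idd f(z)|\le\frac1\pi\int_G\frac{|f(\zeta)|}{|z-\zeta|}\,d\vec\zeta+\frac1\pi\int_G\frac{|f(\zeta)|}{|z-\overline\zeta|}\,d\vec\zeta,\qquad z\in G,
\end{equation*}
and since the kernel $\kappa(w):=|w|^{-1}\chi_{\{|w|<2R\}}$ lies in $L^1(\CC)$, Young's convolution inequality (applied, in the second term, after the measure‑preserving reflection $\zeta\mapsto\overline\zeta$) yields $\nrm{\idb f}_{L^p(G)}+\nrm{\idd f}_{L^p(G)}\le C(R)\nrm{f}_{L^p(G)}$ for every $p\in(1,\infty)$.

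Next I would control the first derivatives. By \eqref{eq:dbinv} we already have $\db\,\idb f=f$ and $\dd\,\idd f=f$ in $\adst(G)$. For the remaining mixed derivatives, a classical computation (differentiate under the integral off the diagonal and take the principal‑value limit; see \cite{Vekua62,BTW20}) gives, in $\adst(G)$,
\begin{equation*}
	\dd\,\idb f=Sf,\qquad \db\,\idd f=\widetilde S f,
\end{equation*}
where $Sf(z)=-\pi^{-1}\,\mathrm{p.v.}\!\int (z-\zeta)^{-2}f(\zeta)\,d\vec\zeta$ is the Beurling–Ahlfors transform and $\widetilde S$ is, up to the reflection $\zeta\mapsto\overline\zeta$, its complex conjugate ($\widetilde Sf=\overline{S\overline f}$ in the reflected variable). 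The kernel of $S$ is homogeneous of degree $-2$, smooth off the origin, and has vanishing mean over circles about the singularity; hence $S$ is a Calderón–Zygmund operator and, by the Calderón–Zygmund theorem, is bounded on $L^p(\CC)$ for every $p\in(1,\infty)$, and $\widetilde S$ is bounded for the same reason (or trivially, since $\nrm{\widetilde Sf}_{L^p}=\nrm{S\overline f}_{L^p}$). To make the two identities rigorous for $f\in L^p$, I would verify them for $f\in C_c^\infty(\CC)$ by the standard excised‑disk argument and then pass to the limit: approximating $f$ in $L^p(\CC)$ by $C^\infty_c$ functions, $\idb f_n\to\idb f$ in $L^p_{\mathrm{loc}}(\CC)$ by the previous step while $Sf_n\to Sf$ in $L^p(\CC)$, and $\db,\dd$ are continuous on $\adst$. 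This gives $\nrm{\dd\,\idb f}_{L^p(G)}+\nrm{\db\,\idd f}_{L^p(G)}\le C_p\nrm{f}_{L^p(G)}$.

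Finally I would assemble the gradient bound. Since $\partial_{x_1}=\db+\dd$ and $\partial_{x_2}=\im(\dd-\db)$, for any distribution $u$ one has $\nrm{\nabla u}_{L^p(G)}\le 2\pare{\nrm{\db u}_{L^p(G)}+\nrm{\dd u}_{L^p(G)}}$; applying this to $u=\idb f$ and to $u=\idd f$ and combining with the two previous steps, all the weak derivatives of $\idb f$ and $\idd f$ lie in $L^p(G)$ and
\begin{equation*}
	\nrm{\idb f}_{W^{1,p}(G)}+\nrm{\idd f}_{W^{1,p}(G)}\le C(R,p)\,\nrm{f}_{L^p(G)},
\end{equation*}
which is the claim. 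Note that only the boundedness of $G$ is used, not its regularity or simple connectedness.

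The main obstacle is the second step. The $L^p$‑bound on the operators (Young) and the passage from $\db,\dd$‑derivatives to the full gradient are routine; what requires care is the distributional identification $\dd\,\idb f=Sf$ — the principal‑value computation — together with the $L^p$‑boundedness of the Beurling–Ahlfors transform, which is a black‑box invocation of Calderón–Zygmund singular‑integral theory. For $p=2$ the latter is elementary via Plancherel, the Fourier multiplier of $S$ having modulus one, so one may quote that special case if one prefers to avoid the general theory.
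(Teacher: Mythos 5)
The paper does not prove this lemma at all; it simply cites \cite{Vekua62,BTW20}, and your argument is precisely the standard proof underlying those references --- Young's inequality with the locally integrable Cauchy kernel on a bounded domain for the $L^p$ bound, the distributional identities $\db\,\idb f=f$ and $\dd\,\idb f=Sf$ with $S$ the Beurling transform, and Calder\'on--Zygmund (or Plancherel when $p=2$) boundedness of $S$ --- so it is correct and consistent with the paper's intent. The one point worth flagging is that with the kernel $(\overline z-\zeta)^{-1}$ as literally printed, $\dd\,\idd f$ comes out as a reflected copy of $f$ rather than $f$ itself (the convention matching \eqref{eq:dbinv} is the kernel $(\overline z-\overline\zeta)^{-1}$), but your treatment of $\idd$ through the measure-preserving reflection $\zeta\mapsto\overline\zeta$ covers either convention, and the stated $L^p\to W^{1,p}$ bound holds in both cases.
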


Denote 
\begin{equation*}
	\imP=\Im\Phi=r^\alpha \sin(\alpha\theta),\qquad \theta\in (-\pi,\pi).
\end{equation*}
By a straightforward computation we have
\begin{equation}\label{eq:RePinv}
	(\db\imP)^{-1} =\im \frac{2}{\alpha}\, r^{1-\alpha}e^{-\im(1-\alpha)\theta}
	,\qquad \theta\in(-\pi,\pi),
\end{equation}
which is anti-holomorphic in $G$.
For $f\in \adst(G)$, define the operator
\begin{equation*}
\So f=\So_{h,G,q}f=\im\frac{h}{4}\Sb \Sa (qf),
\end{equation*}
where
\begin{equation*}
	\Sa f={\Sa}_{,h,G}f=\idd \pare{e^{-2\im\imP/h}f},
\end{equation*}
and
\begin{equation*}
	\Sb f ={\Sb}_{,h,G} f = e^{2\im\imP/h}\frac{f}{2\db\imP}
	-\idb (e^{2\im\imP/h}\db(\frac{ f}{2\db\imP})).
\end{equation*}
Then we have the following two lemmas for $\So$.
\begin{lem}\label{lem:SoSol}
	Let $w=\So f$. Then $w$ solves 
	\begin{equation*}
		4 e^{2\im\imP/h}\dd\pare{e^{-2\im\imP/h}\db w} = -qf\qquad\text{in}\quad G.
	\end{equation*}
\end{lem}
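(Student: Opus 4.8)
The plan is to verify directly that $w=\So f=\im\frac{h}{4}\Sb\Sa(qf)$ satisfies the stated equation by unwinding the definitions of $\Sa$ and $\Sb$ and exploiting the right-inverse properties \eqref{eq:dbinv} together with the explicit identity \eqref{eq:RePinv} for $(\db\imP)^{-1}$. First I would set $g=qf$ and compute $\db w$. Writing $\Sb\psi = e^{2\im\imP/h}\frac{\psi}{2\db\imP} - \idb\bigl(e^{2\im\imP/h}\db(\frac{\psi}{2\db\imP})\bigr)$ and applying $\db$, the term $\idb(\cdots)$ disappears under $\db$ by \eqref{eq:dbinv}, leaving
\begin{equation*}
\db\bigl(\Sb\psi\bigr) = \db\Bigl(e^{2\im\imP/h}\frac{\psi}{2\db\imP}\Bigr) - e^{2\im\imP/h}\db\Bigl(\frac{\psi}{2\db\imP}\Bigr) = \frac{2\im}{h}(\db\imP)\,e^{2\im\imP/h}\cdot\frac{\psi}{2\db\imP} = \frac{\im}{h}\,e^{2\im\imP/h}\,\psi,
\end{equation*}
using the product rule and the fact that $\db e^{2\im\imP/h} = \frac{2\im}{h}(\db\imP)e^{2\im\imP/h}$. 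With $\psi = \Sa g = \idd(e^{-2\im\imP/h}g)$ this gives $\db w = \im\frac{h}{4}\cdot\frac{\im}{h}\,e^{2\im\imP/h}\,\idd(e^{-2\im\imP/h}g) = -\frac14\,e^{2\im\imP/h}\,\idd(e^{-2\im\imP/h}g)$.

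Next I would multiply by $e^{-2\im\imP/h}$ and apply $\dd$. Since $\dd\idd$ is the identity on $\adst(G)$ by \eqref{eq:dbinv}, we get
\begin{equation*}
\dd\bigl(e^{-2\im\imP/h}\db w\bigr) = -\tfrac14\,\dd\,\idd\bigl(e^{-2\im\imP/h}g\bigr) = -\tfrac14\,e^{-2\im\imP/h}\,g = -\tfrac14\,e^{-2\im\imP/h}\,qf.
\end{equation*}
Multiplying through by $4e^{2\im\imP/h}$ yields exactly $4e^{2\im\imP/h}\dd(e^{-2\im\imP/h}\db w) = -qf$, which is the claim.

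The only genuine subtlety — and the one step I would treat carefully rather than as routine — is the justification of differentiating under the Cauchy transforms, i.e. that $\db\idb = \Id$ and $\dd\idd = \Id$ may be applied here in the distributional sense even though the integrands involve the factors $e^{\pm 2\im\imP/h}$ and the weight $(2\db\imP)^{-1} = \im\frac{2}{\alpha}r^{1-\alpha}e^{-\im(1-\alpha)\theta}$, which is only bounded (not smooth) near the origin. One checks that $e^{\pm 2\im\imP/h}$ is bounded with bounded derivatives away from $0$ and, because $\imP = r^\alpha\sin(\alpha\theta)$ with $\alpha<1$, the relevant products lie in $L^1(G)$ (indeed in the $L^p$ spaces needed for Lemma~\ref{lem:dbinvMap}), so all the manipulations are legitimate as identities in $\adst(G)$; the algebraic identity $\db(\frac{\psi}{2\db\imP}\,e^{2\im\imP/h}) - e^{2\im\imP/h}\db(\frac{\psi}{2\db\imP}) = \frac{\im}{h}e^{2\im\imP/h}\psi$ is then just the Leibniz rule applied to locally integrable functions with locally integrable weak $\db$-derivatives. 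I would state this as a short remark and otherwise let the computation above stand as the proof.
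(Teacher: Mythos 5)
Your proposal is correct and follows essentially the same route as the paper: apply $\db$ to $\Sb$, use $\db\idb=\Id$ to cancel the Cauchy-transform term, obtain $\db\So f=-\frac14 e^{2\im\imP/h}\idd(e^{-2\im\imP/h}qf)$, and then apply $\dd\idd=\Id$ to conclude. The extra remark on justifying the distributional manipulations is a reasonable addition but not needed beyond what the paper already invokes via \eqref{eq:dbinv} and Lemma~\ref{lem:dbinvMap}.
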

\begin{proof}
	Applying \eqref{eq:dbinv} we deduce that
	\begin{equation*}
		\frac{4}{\im h}\db\So f=\db\Sb f_0
		= \db(e^{2\im\imP/h}\frac{f_0}{2\db\imP})- e^{2\im\imP/h}\db\frac{ f_0}{2\db\imP}
		=\frac{\im}{h} e^{2\im\imP/h} f_0,
	\end{equation*}
with $f_0=\Sa (qf)$.
As a consequence,
\begin{equation*}
	-4\dd e^{-2\im\imP/h}\db\So f
	=  \dd f_0
	=\dd \idd \pare{e^{-2\im\imP/h}f}
	=  e^{-2\im\imP/h}qf,
\end{equation*}
which completes the proof.
\end{proof}
\begin{lem}\label{lem:Smapping}
	If $q\in L^{\infty}(G)$ (and $\alpha\in(0,1)$), then for $h$ sufficiently small we have
	\begin{equation*}
		\|\So f\|_{L^{p}(G)}
		\le C_ph\|q\|_{L^{\infty}{(G)}}\|f\|_{L^p(G)},\qquad \text{for all } ~f\in L^p(G),~ 1<p<\infty,
	\end{equation*}
and
\begin{equation*}
	\|\So f\|_{W^{1,p}(G)}
	\le C_p\|q\|_{L^{\infty}{(G)}}\|f\|_{L^p(G)},\qquad f\in L^p(G),~1<p<2/\alpha,
\end{equation*}
where $C_p$ is a constant independent of $f$ and $h$.
\end{lem}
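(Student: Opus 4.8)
The plan is to unwind $\So=\im\tfrac h4\,\Sb\Sa(q\,\cdot\,)$ one factor at a time, keeping track simultaneously of the $L^p(G)$ operator norms and of the powers of $h$; the reason $h$ rather than $1$ appears in the first estimate is that the definition of $\Sb$ encodes an integration by parts converting one factor of $h^{-1}$ into an $O(1)$ (in $h$) term. First I would set $f_0:=\Sa(qf)=\idd\!\pare{e^{-2\im\imP/h}(qf)}$. Since $\imP$ is real-valued, the multiplier $e^{-2\im\imP/h}$ is unimodular, so Lemma~\ref{lem:dbinvMap} gives $f_0\in W^{1,p}(G)$ with $\nrm{f_0}_{W^{1,p}(G)}\le C_p\nrm{q}_{L^\infty(G)}\nrm{f}_{L^p(G)}$, uniformly in $h$. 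It then suffices to show $\nrm{\Sb f_0}_{L^p(G)}\le C_p\nrm{f_0}_{W^{1,p}(G)}$ for $1<p<\infty$, and $\nrm{\Sb f_0}_{W^{1,p}(G)}\le C_p\,h^{-1}\nrm{f_0}_{W^{1,p}(G)}$ for $1<p<2/\alpha$, since multiplying by $h/4$ and using $h<h_0\le 1$ yields the two stated bounds.

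Writing $g:=f_0/(2\db\imP)$, I would record from \eqref{eq:RePinv} that $(2\db\imP)^{-1}=\tfrac{\im}{\alpha}\bar z^{\,1-\alpha}$ is anti-holomorphic, bounded on the bounded set $G$ (here $\alpha<1$ enters), and satisfies $\abs{\db\bigl((2\db\imP)^{-1}\bigr)}=\tfrac{1-\alpha}{\alpha}\abs{z}^{-\alpha}$; hence $\abs{g}\lesssim\abs{f_0}$, $\dd g=\tfrac{\im}{\alpha}\bar z^{\,1-\alpha}\dd f_0$, and $\abs{\db g}\lesssim\abs{z}^{-\alpha}\abs{f_0}+\abs{\db f_0}$. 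The $\db$-derivative costs nothing: as in the proof of Lemma~\ref{lem:SoSol}, $\db\Sb f_0=\tfrac{\im}{h}e^{2\im\imP/h}f_0$, so $\nrm{\db\So f}_{L^p}=\tfrac14\nrm{f_0}_{L^p}$, with no loss in $h$ and no restriction on $p$. For $\nrm{\Sb f_0}_{L^p}$ I would split $\Sb f_0=e^{2\im\imP/h}g-\idb\!\pare{e^{2\im\imP/h}\db g}$: the first term is $\lesssim\nrm{f_0}_{L^p}$, and for the second I use Lemma~\ref{lem:dbinvMap}. When $1<p<2/\alpha$ the factor $\abs{z}^{-\alpha}f_0$ already lies in $L^p$ (see the last paragraph), so $\idb$ acts boundedly $L^p\to W^{1,p}(G)\subset L^p(G)$; when $p\ge 2/\alpha$ — which forces $p>2$, since $\alpha<1$, whence $f_0\in W^{1,p}(G)\hookrightarrow C(\overline G)$ — I pick $q_0\in(2,2/\alpha)$, note $\abs{z}^{-\alpha}f_0\in L^{q_0}(G)$, and use $\idb\colon L^{q_0}(G)\to W^{1,q_0}(G)\hookrightarrow C(\overline G)\subset L^p(G)$. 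Either way $\nrm{\Sb f_0}_{L^p}\lesssim\nrm{f_0}_{W^{1,p}}$, which gives the first estimate.

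For the second estimate it remains to bound $\nrm{\dd\Sb f_0}_{L^p}$ for $1<p<2/\alpha$. Applying $\dd$ to $e^{2\im\imP/h}g$ produces $e^{2\im\imP/h}\bigl(\tfrac{2\im}{h}(\dd\imP)\,g+\dd g\bigr)$; only $\tfrac{2\im}{h}(\dd\imP)\,g$ carries $h^{-1}$, but since $\imP$ is real $\dd\imP=\overline{\db\imP}$, so this term equals $\tfrac{\im}{h}\bigl(\overline{\db\imP}/\db\imP\bigr)f_0$, of modulus exactly $h^{-1}\abs{f_0}$ — a single power of $h^{-1}$, precisely the one absorbed by the prefactor $h/4$; the other piece $\dd g=\tfrac{\im}{\alpha}\bar z^{\,1-\alpha}\dd f_0$ contributes only $\lesssim\nrm{\dd f_0}_{L^p}$. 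The term $\dd\idb\!\pare{e^{2\im\imP/h}\db g}$ is controlled, using that $\dd\idb$ is bounded on $L^p(G)$ (the Beurling transform, or Lemma~\ref{lem:dbinvMap} once more), by $\nrm{\db g}_{L^p}\lesssim\nrm{\db f_0}_{L^p}+\nrm{\,\abs{z}^{-\alpha}f_0\,}_{L^p}$. Collecting, $\nrm{\dd\Sb f_0}_{L^p}\lesssim h^{-1}\nrm{f_0}_{L^p}+\nrm{\db f_0}_{L^p}+\nrm{\,\abs{z}^{-\alpha}f_0\,}_{L^p}$.

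The genuinely delicate input — and the reason for the restriction $p<2/\alpha$ — is the Hardy-type weighted bound
\[
\nrm{\,\abs{z}^{-\alpha}v\,}_{L^p(G)}\le C_p\,\nrm{v}_{W^{1,p}(G)},\qquad 1<p<2/\alpha,
\]
valid for $G$ a bounded Lipschitz domain. I would prove it by extending $v$ to $W^{1,p}(\RR^2)$, applying the Sobolev embedding ($W^{1,p}\hookrightarrow L^{2p/(2-p)}$ if $p<2$, into $L^\infty$ if $p>2$, into every $L^b$ with $b<\infty$ if $p=2$) and Hölder against $\abs{z}^{-\alpha}$, which belongs to $L^s(G)$ exactly when $\alpha s<2$; the Hölder exponents close precisely because $\alpha<1$ (for $p<2$ one needs $\abs{z}^{-\alpha}\in L^2(G)$, i.e.\ $2\alpha<2$; for $2\le p<2/\alpha$ one needs $\abs{z}^{-\alpha}\in L^s(G)$ for $s$ up to $p$, i.e.\ $\alpha p<2$). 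Feeding this into the previous paragraph gives $\nrm{\dd\Sb f_0}_{L^p}\lesssim h^{-1}\nrm{f_0}_{W^{1,p}}$ for $1<p<2/\alpha$, hence $\nrm{\dd\So f}_{L^p}=\tfrac h4\nrm{\dd\Sb f_0}_{L^p}\lesssim\nrm{q}_{L^\infty}\nrm{f}_{L^p}$, and combining with the bounds already obtained for $\nrm{\So f}_{L^p}$ and $\nrm{\db\So f}_{L^p}$ we get $\nrm{\So f}_{W^{1,p}(G)}\lesssim\nrm{q}_{L^\infty}\nrm{f}_{L^p}$. I expect this weighted inequality, together with the careful tracking of the admissible exponent range, to be the main obstacle; the conceptual heart — and the source of the small factor $h$ — is the elementary identity $\abs{\overline{\db\imP}/\db\imP}\equiv 1$ combined with the integration-by-parts form of $\Sb$, which ensures that never more than one power of $h^{-1}$ is produced.
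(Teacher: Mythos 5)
Your proof is correct and follows essentially the same route as the paper: the same factorization of $\So$ through $\Sa$ and $\Sb$, the same exact identity $\db\,\Sb f_0=\tfrac{\im}{h}e^{2\im\imP/h}f_0$ isolating the single power of $h^{-1}$, and the same key facts that $(2\db\imP)^{-1}$ is bounded and anti-holomorphic with $\abs{\db\bigl((2\db\imP)^{-1}\bigr)}\sim\abs{z}^{-\alpha}\in L^{s}(G)$ exactly for $s<2/\alpha$. The only difference is bookkeeping --- you package the Sobolev-plus-H\"older control of $\abs{z}^{-\alpha}f_0$ as a standalone weighted inequality (with a separate $C(\overline G)$-embedding argument for $p\ge 2/\alpha$), where the paper inlines the same estimates through explicit exponent choices $p_1,\dots,p_5$ --- so there is nothing to fix.
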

\begin{proof}
	It is seen directly from Lemma~\ref{lem:dbinvMap} that 
	\begin{equation*}
		\|\Sa f\|_{W^{1,p}(G)}\le C\|f\|_{L^p(G)},\qquad 1<p<\infty,
	\end{equation*}	
where $C$ is some constant independent of $f$ and $h$ (and the same is valid for the rest of the proof but with different values of $C$).
	Recall from \eqref{eq:RePinv} that $(\db\imP)^{-1}\in L^{\infty}(G)$,
	\begin{equation*}
	\dd \frac{1}{\db\imP}=0\AND
		\db \frac{1}{\db\imP} =2\im\frac{1-\alpha}{\alpha}r^{-\alpha}e^{\im \alpha \theta}
	,\qquad \theta\in(-\pi,\pi)	.
	\end{equation*}
Hence
	\begin{equation*}
		\|\db (2\db\imP)^{-1}\|_{L^p(\weg_{\theta_1,R_1})}
		=\frac{1-\alpha}{\alpha}\pare{\frac{2\theta_1}{2-\alpha p}}^{1/p} R_1^{2/p-\alpha},\qquad p<2/\alpha.
	\end{equation*}

	Applying Sobolev inequalities and Lemma~\ref{lem:dbinvMap} {we have}
	\begin{equation*}
		\nrm{\idb (e^{2\im\imP/h}\db\frac{ f}{2\db\imP})}_{L^{p}(G)}
		\le C\nrm{\idb (e^{2\im\imP/h}\db\frac{ f}{2\db\imP})}_{W^{1,p_1}(G)}
		\le C
		\nrm{\db\frac{ f}{2\db\imP}}_{L^{p_1}(G)}
	\end{equation*}
with $1/p_1\le1/p+1/2$ and $1<p_1<\infty$.
By H\"{o}lder's inequality we obtain
\begin{equation*}
	\|f\db (2\db\imP)^{-1}\|_{L^{p_1}(G)}
	\le \|f\|_{L^{p_2}(G)}\|(2\db\imP)^{-1}\|_{L^{p_2^*}(G)},
\end{equation*}
where  $1/p_2+1/p_2^*\le1/p_1$ and $p_2^*<2/\alpha$ (and hence $1<p_1\le p_2^*<2/\alpha$).
{Therefore}, 
	\begin{equation*}
		\begin{split}
			\nrm{\idb (e^{2\im\imP/h}\db\frac{ f}{2\db\imP})}_{L^{p}(G)}
		&\le C
		(\|\db f\|_{L^{p_1}(G)}+
		\|f\db (2\db\imP)^{-1}\|_{L^{p_1}(G)})
		\\&\le C(\|f\|_{W^{1,p_1}(G)}+
		\|f\|_{W^{1,p_3}(G)})
		\le C\|f\|_{W^{1,p}(G)},
		\end{split}
	\end{equation*}
for $1/p_3\le1/p_2+1/2$ and $p\ge p_j$, $j=1,3$.
In fact, we can choose $p_2^*$ and $p_j$, $j=1,2,3$, {such that} 
\begin{equation*}
	\begin{split}
		\nrm{\idb (e^{2\im\imP/h}\db\frac{ f}{2\db\imP})}_{L^{p}(G)}
		\le C_p\|f\|_{W^{1,p}(G)}\qquad\mbox{for any $p\in(1,\infty)$}.
	\end{split}
\end{equation*}
For example, when $p\ge2/\alpha$, one can take $p_1=p_3=p_2^*=2$ and $p_2=\infty$; for $p\in[2,2/\alpha)$, we can choose $p_1=p_2^*=p$, $p_3=2$ and $p_2=\infty$; as for $p\in(1,2)$, one can specify $p_1=p_3=p$, $p_2^*=2$ and $1/p_2=1/p-1/2$.
	Hence for $1<p<\infty$ {we have}
	\begin{equation*}
		\|\Sb f\|_{L^{p}(G)}
		\le\nrm{\frac{f}{2\db\imP}}_{L^{p}(G)}
		+\nrm{\idb (e^{2\im\imP/h}\db(\frac{ f}{2\db\imP}))}_{L^{p}(G)}
		\le C
		\|f\|_{W^{1,p}(G)}.
	\end{equation*}
Thus by Lemma~\ref{lem:dbinvMap} again,
\begin{equation*}
		\|\So f\|_{L^{p}(G)}
		\le C h
		\|\Sa (qf)\|_{W^{1,p}(G)}
		\le Ch \|qf\|_{L^{p}(G)}
		\le Ch\|q\|_{L^{\infty}{(G)}}\|f\|_{L^p(G)}.
	\end{equation*}

We are left to estimate $\So f$ in $W^{1,p}(G)$.
Denote $f_1=e^{2\im\imP/h}(2\db\imP)^{-1} f$. Then
\begin{equation*}
	\dd f_1=e^{2\im\imP/h}\pare{\frac{\im}{h}\frac{\dd \imP}{\db\imP}f+\frac{\dd f}{2\db\imP}}
	\AND
	\db f_1=e^{2\im\imP/h}\pare{\frac{\im}{h}f+f\db \frac{1}{2\db\imP}+ \frac{\db f}{2\db\imP}}.
\end{equation*}
Hence by H\"{o}lder's inequality,
\begin{equation*}
	\begin{split}
		&\|\dd f_1\|_{L^p(G)}+\|\db f_1\|_{L^p(G)}
		\\\le& C \pare{\frac{1}{h}\|f\|_{L^p(G)}+\|(2\db\imP)^{-1}\|_{L^{\infty}(G)}\|f\|_{W^{1,p}(G)}+ \|\dd (2\db\imP)^{-1}\|_{L^{p_4^*}(G)}\|f\|_{L^{p_4}(G)}},
	\end{split}
\end{equation*}
where $1/p_4+1/p_4^*\le1/p$ and $0<p<p_4^*<2/\alpha$.
Thus, applying  again Lemma~\ref{lem:dbinvMap} and Sobolev inequalities we obtain {that}
\begin{equation*}
	\begin{split}
		&\|\Sb f\|_{W^{1,p}(G)} 
\\		\le&
		C(\|f_1\|_{W^{1,p}(G)} 
		+\nrm{\idb (e^{2\im\imP/h}\db(\frac{ f}{2\db\imP}))}_{W^{1,p}(G)} )
\\		\le&C\pare{\frac{1}{h}\|f\|_{L^p(G)}+\|(2\db\imP)^{-1}\|_{L^{\infty}(G)}\|f\|_{W^{1,p}(G)}+ \|\dd (2\db\imP)^{-1}\|_{L^{p_4^*}(G)}\|f\|_{L^{p_4}(G)}}
		\\\le &C\pare{
		\frac{1}{h}\|f\|_{L^{p}(G)}
		+\|f\|_{W^{1,p}(G)}
		+ \|f\|_{W^{1,p_5}(G)}}
	\end{split}
\end{equation*}
with $1/p_5\le1/p_4+1/2\le 1/p-1/p_4^*+1/2$ and $1<p<p_4^*<2/\alpha$.
Taking $p_4^*\in[2,2/\alpha)$ then we have that for $h$ sufficiently small,
\begin{equation*}
	\begin{split}
		\|\Sb f\|_{W^{1,p}(G)} 
	\le&C_p	\frac{1}{h}\|f\|_{W^{1,p}(G)} \qquad \mbox{for all $p\in(1,2/\alpha)$}.
	\end{split}
\end{equation*}
Then we further obtain that
\begin{equation*}
	\begin{split}
		&\|\So f\|_{W^{1,p}(G)} 
		=\frac{h}{4}\|\Sb \Sa (qf)\|_{W^{1,p}(G)} 
		\\\le &C\| \Sa (qf)\|_{W^{1,p}(G)}
		\le C
			\|qf\|_{L^{p}(G)}
			\le C
			\|q\|_{L^{\infty}(G)}\|f\|_{L^{p}(G)}
	\end{split}
\end{equation*}
for $h$ sufficiently small and $p\in(1,2/\alpha)$.
\end{proof}
We are now ready to prove Proposition~\ref{prop:CGO1}.
\begin{proof}[Proof of Proposition~\ref{prop:CGO1}]
	We first obtain from Lemma~\ref{lem:Smapping} that, with $1<p<\infty$ and $h$ sufficiently small, {$(I-\So)$} is invertible on $L^p(G)$ and the inverse is bounded. Moreover, the inverse can be written as the Neumann series
	\begin{equation*}
	(I-\So)^{-1}f=\sum_{j=0}^{\infty}\So^j f.
	\end{equation*}
	Define $W$ as in \eqref{eq:CGOform} with $w_h=(I-\So)^{-1}\So A$. We claim that $W$ satisfies $\Delta W+q W=0$ in $G$. In fact, the analyticity of $\Phi$ implies that $\db W=e^{-\Phi/h}\db W_h$ in $G$ and hence
	\begin{equation*}
	\dd\db W=
	\dd\pare{e^{-\Phi/h}\db W_h} 
	=e^{-\overline{\Phi}/h}\dd\pare{e^{ (\overline{\Phi}-\Phi)/h}\db W_h}
	\qquad\mbox{in $G$}.
	\end{equation*} 
	Therefore, $\Delta W+q W=0$ in $G$ if (and only if)  
	\begin{equation*}
	4 e^{2\im\imP/h}\dd\pare{e^{-2\im\imP/h}\db w_h} = -q (A+w_h)\qquad\mbox{where $\imP=\Im\Phi$}.
	\end{equation*}
	\hl{Applying Lemma}~\ref{lem:SoSol}, \hl{the latter can be solved by $w_h=\So (A+w_h)$.} 
	
	We are left with the proof of the estimates. Recall from Lemma~\ref{lem:Smapping} again that
	\begin{equation*}
	\|w_h\|_{L^p(G)}=\|(I-\So)^{-1}\So A\|_{L^p(G)}
	\le C\|\So A\|_{L^p(G)} \le C\text{\hl{$h$}}\| A\|_{L^p(G)},\qquad 1<p<\infty.
	\end{equation*}
	Moreover, by the Neumann series of $(I-\So)^{-1}$ we have $(I-\So)^{-1}\So=\sum_{j=1}^{\infty}\So =\So(I-\So)^{-1}$ and hence
	\begin{equation*}
	\|w_h\|_{W^{1,p}(G)}=\|\So (I-\So)^{-1}A\|_{W^{1,p}(G)}
	\le C\|(I-\So)^{-1} A\|_{L^p(G)} \le C\| A\|_{L^p(G)},\quad 1<p<2/\alpha.
	\end{equation*}
\end{proof}

\section{Local Analysis near a Corner}\label{sec:corner}
In this section, we consider nontrivial solutions $(u,v)$ to \eqref{eq:ITEPlocal} where 
$\gamma$ is scalar in $\weg_{\theta_0,\tilde{\epa}}$. We aim to prove Proposition~\ref{prop:mainLocal}. 

{Let $u,v\in H^1 (\weg_{\theta_0,\tilde{\epa}})$ satisfy} \eqref{eq:ITEPlocal}. Integration by parts yields (see also, \cite{CaX21})
	\begin{equation*}
		\int_{\weg_{\theta_0,\epa}}\pare{\gamma-1} \nabla v\cdot\nabla w  -k^2 (\rho -1) v w\, dx
		=\int_{\partial B_{\epa}\cap \weg_{\theta_0}}\partial_{\nu}^{\gamma}w\pare{v-u}-w\pare{\partial_{\nu}v-\partial_{\nu}^{\gamma}u}\, ds(x)
	\end{equation*}
	{with any $\epa\in(0,\tilde{\epa})$, where $w\in H^1(\weg_{\theta_0,\epa})$ is an arbitrary solution to $\nabla\cdot \gamma \nabla w + k^2\rho  w=0$ in $\weg_{\theta_0,\epa}$.}
In the following, we shall apply Corollary~\ref{cor:CGO} 
{for $w$ with 
	the domain $G$ taken as, for instance, $G=\weg_{\tilde{\theta}_0,\tilde{\epa}}$ for some $\tilde{\theta}_0\in(\theta_0,\pi)$.}
In particular, for any $h\in\RR_+$ which is sufficiently small, there is a solution $w$ of the form \eqref{eq:CGO} with the ``residual'' $w_h$ satisfying the estimates in \eqref{eq:CGOresid}. Notice that for the ``leading term'' $e^{-\Phi/h}$ of $w_h$ we have $\Re\Phi(x)=r^{\alpha}\cos(\alpha\theta)$, where $(r,\theta)$ comes from the polar coordinates of $x$ with $\theta$ taken in $(-\pi,\pi)$. Then we observe for $\alpha\in(0,\frac{1}{2}\pi/\theta_0)$ that
\begin{equation}\label{eq:PhiBound}
	\Re\Phi(x)\ge \delta |x|^{\alpha} \qquad x\in \weg_{\theta_0},
\end{equation}
for some  positive constant $\delta$ independent of $x$.
As a consequence, by similar arguments as in \cite{CaX21} we obtain
\begin{lem}\label{lem:Int2}
	Let $w$ be as specified in Corollary~\ref{cor:CGO} with $\alpha\in(0,\frac{1}{2}\pi/\theta_0)$, $A\equiv 1$ and $h\ll 1$. If $u$ and $v$ satisfy \eqref{eq:ITEPlocal}, then
	\begin{equation}\label{eq:IntK}
		\int_{\weg_{\theta_0,\epa}}\pare{\gamma-1} \nabla v\cdot\nabla w  -k^2 (\rho -1) v w\, dx
		=o(e^{-\epa^{\alpha}\delta/(3h)});
	\end{equation}
and the same is valid for $w$ replaced by its complex conjugate $\overline{w}$.
\end{lem}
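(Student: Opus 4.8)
The goal is to show that the boundary terms on $\partial B_\epa\cap\weg_{\theta_0}$ in the integration-by-parts identity vanish faster than any exponential $e^{-\epa^\alpha\delta/(3h)}$ as $h\to 0$, so that the (scatterer-side) volume integral inherits this decay. The strategy is to estimate the right-hand side of the identity
\[
	\int_{\weg_{\theta_0,\epa}}\pare{\gamma-1} \nabla v\cdot\nabla w  -k^2 (\rho -1) v w\, dx
	=\int_{\partial B_{\epa}\cap \weg_{\theta_0}}\partial_{\nu}^{\gamma}w\pare{v-u}-w\pare{\partial_{\nu}v-\partial_{\nu}^{\gamma}u}\, ds(x),
\]
using that on the arc $\partial B_\epa\cap\weg_{\theta_0}$ the CGO weight $e^{-\Phi/h}$ is pointwise bounded by $e^{-\delta\epa^\alpha/h}$ thanks to \eqref{eq:PhiBound}. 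First I would record that by Corollary~\ref{cor:CGO} the solution $w$ has the form $w=\gamma^{-1/2}(1+w_h)e^{-\Phi/h}$ with $A\equiv1$, and since $1\in L^{p_0}(G)$ for every $p_0$ (the sector $G=\weg_{\tilde\theta_0,\tilde\epa}$ being bounded), the residual $w_h$ obeys $\|w_h\|_{L^{p_1}(G)}\le C h^{-1}$ for all $p_1\in(1,\infty)$ and $\|w_h\|_{W^{1,p_2}(G)}\le C$ for $p_2\in(1,2/\alpha)$, with $C$ independent of $h$.

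The main obstacle, and the place where the argument has real content, is that the boundary terms involve $w$ and $\partial_\nu^\gamma w$ on a \emph{fixed} arc $\partial B_\epa\cap\weg_{\theta_0}$, i.e.\ a one-dimensional trace, whereas the estimates on $w_h$ are only $L^p$/$W^{1,p}$ bounds in the two-dimensional sector with an $h^{-1}$ loss. To convert these into a pointwise or trace bound on $\partial B_\epa$ I would use an interior-regularity / Caccioppoli-type argument: $w$ solves $\nabla\cdot\gamma\nabla w+k^2\rho w=0$ in the slightly larger sector $\weg_{\tilde\theta_0,\tilde\epa}$, so on a compact neighbourhood of the arc $\partial B_\epa\cap\weg_{\theta_0}$ that is contained in the interior of that larger sector, elliptic estimates bound $\|w\|_{H^1}$ (and, using $\Delta\gamma^{1/2}\in L^\infty$, further Sobolev norms giving continuity) on that neighbourhood by $\|w\|_{L^2}$ on a slightly larger set. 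Crucially, on any region of $\weg_{\tilde\theta_0}$ that stays a fixed distance away from the corner but near $|x|=\epa$, the factor $e^{-\Phi/h}$ and all its derivatives are bounded by $C h^{-N}e^{-\delta'\epa^\alpha/h}$ for a $\delta'$ slightly smaller than $\delta$; multiplying by the polynomially-in-$h^{-1}$ bounded factors $(1+w_h)$, $\gamma^{-1/2}$ and their derivatives, the whole of $w$ and $\nabla w$ on the arc is $O\!\pare{h^{-N}e^{-\delta'\epa^\alpha/h}}$. Replacing $\delta'$ by $\delta/2>\delta/3$ absorbs the polynomial factor $h^{-N}$, giving $|w|+|\nabla w|=o(e^{-\epa^\alpha\delta/(3h)})$ uniformly on $\partial B_\epa\cap\weg_{\theta_0}$.

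With the pointwise arc bound in hand the rest is bookkeeping: $v-u$ and $\partial_\nu v-\partial_\nu^\gamma u$ lie in $H^{1/2}$, resp.\ $H^{-1/2}$, of the arc (or, after the interior regularity used above for $u$ as well, are at least in $L^2$ of the arc) with norms independent of $h$, so the boundary integral is bounded by $\pare{\text{$h$-independent constant}}\cdot o(e^{-\epa^\alpha\delta/(3h)})$, which is $o(e^{-\epa^\alpha\delta/(3h)})$; this proves \eqref{eq:IntK}. The argument for $\overline{w}$ is identical, since $\overline{w}$ solves the same (real-coefficient) equation, has leading weight $e^{-\overline{\Phi}/h}$ with $\Re\overline{\Phi}=\Re\Phi$, and the residual $\overline{w_h}$ satisfies the same estimates. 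I would remark that this is exactly the scheme of \cite{CaX21}, the only new input being the lower bound \eqref{eq:PhiBound} for the present phase $\Phi=r^\alpha e^{\im\alpha\theta}$ on the (possibly concave) sector $\weg_{\theta_0}$, valid precisely because $\alpha<\tfrac12\pi/\theta_0$.
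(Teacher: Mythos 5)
Your proposal is correct and follows essentially the same route the paper takes (which it only sketches by citing \cite{CaX21}): reduce the volume integral to the boundary integral over the arc $\partial B_{\epa}\cap\weg_{\theta_0}$ via the stated integration-by-parts identity, use \eqref{eq:PhiBound} to get exponential smallness of $e^{-\Phi/h}$ near the arc, and upgrade the $L^p$/$W^{1,p}$ control of $w_h$ to trace bounds by interior elliptic regularity in a slightly larger sector, absorbing the polynomial losses in $h$ into the gap between $\delta/2$ and $\delta/3$. The treatment of the $\overline{w}$ case and of the $h$-independent $H^{\pm 1/2}$ pairings with $u,v$ is also consistent with what the paper intends.
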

In the rest of this section, we shall establish estimates for the left-hand-side integral in \eqref{eq:IntK}. In particular, upper bounds for certain terms in the integral are stated in Section~\ref{sec:upB}, while some sharp estimates are proven in Section~\ref{sec:sharpB}. With these estimates, the proof of Proposition~\ref{prop:mainLocal} is given in Section~\ref{sec:proofProp}. To that end, we need the following preliminary result. 
\begin{lem}\label{lem:Gamma}
	Given $\epsilon,b_0>0$ and $b_1,\mu\in\CC$ with $\Re b_1>0$ and $\Re\mu>0$, we have 
	\begin{equation*}
		\abs{\frac{\Gamma({b_1}/{b_0})}{b_0}\mu^{-b_1/b_0}
			-\int_{0}^{\epsilon}t^{b_1-1}e^{-\mu t^{b_0}}dt
	}
\le \frac{\Gamma(\Re{b_1}/{b_0})}{b_0}(2/\Re\mu)^{b_1/b_0} 
e^{-(\Re{\mu}) \epsilon^{b_0}/2},
	\end{equation*}
where the fractional powers (from the left hand side) are taken as their principal values.
\end{lem}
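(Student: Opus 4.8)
\textbf{Proof plan for Lemma~\ref{lem:Gamma}.}

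The plan is to recognize the first term as a complete Gamma integral and estimate the tail. First I would substitute $s = t^{b_0}$, so that $ds = b_0 t^{b_0-1}dt$ and $t^{b_1-1}dt = \frac{1}{b_0}s^{b_1/b_0 - 1}ds$; the finite integral becomes $\frac{1}{b_0}\int_0^{\epsilon^{b_0}} s^{b_1/b_0-1}e^{-\mu s}\,ds$. Similarly, the classical identity $\int_0^\infty s^{\beta-1}e^{-\mu s}\,ds = \Gamma(\beta)\mu^{-\beta}$, valid for $\Re\beta>0$ and $\Re\mu>0$ (with principal branch of $\mu^{-\beta}$), applied with $\beta = b_1/b_0$, identifies $\frac{\Gamma(b_1/b_0)}{b_0}\mu^{-b_1/b_0}$ as $\frac{1}{b_0}\int_0^\infty s^{b_1/b_0-1}e^{-\mu s}\,ds$. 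Hence the quantity to be bounded is exactly the magnitude of the tail integral $\frac{1}{b_0}\int_{\epsilon^{b_0}}^\infty s^{b_1/b_0-1}e^{-\mu s}\,ds$.

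Next I would bound that tail. Writing $\sigma = \Re\mu > 0$ and $\beta_r = \Re(b_1/b_0) = \Re b_1/b_0 > 0$, and using $|s^{b_1/b_0-1}| = s^{\beta_r - 1}$ for $s>0$ and $|e^{-\mu s}| = e^{-\sigma s}$, we get
\begin{equation*}
	\abs{\frac{1}{b_0}\int_{\epsilon^{b_0}}^\infty s^{b_1/b_0-1}e^{-\mu s}\,ds}
	\le \frac{1}{b_0}\int_{\epsilon^{b_0}}^\infty s^{\beta_r-1}e^{-\sigma s}\,ds.
\end{equation*}
To extract the exponential factor, split $e^{-\sigma s} = e^{-\sigma s/2}e^{-\sigma s/2}$ and bound the second factor on the range $s\ge \epsilon^{b_0}$ by $e^{-\sigma\epsilon^{b_0}/2}$, leaving
\begin{equation*}
	\frac{1}{b_0}\int_{\epsilon^{b_0}}^\infty s^{\beta_r-1}e^{-\sigma s}\,ds
	\le \frac{e^{-\sigma\epsilon^{b_0}/2}}{b_0}\int_0^\infty s^{\beta_r-1}e^{-\sigma s/2}\,ds
	= \frac{e^{-\sigma\epsilon^{b_0}/2}}{b_0}\,\Gamma(\beta_r)\,(2/\sigma)^{\beta_r}.
\end{equation*}
Recalling $\beta_r = \Re b_1/b_0$ and $\sigma = \Re\mu$, this is precisely $\frac{\Gamma(\Re b_1/b_0)}{b_0}(2/\Re\mu)^{b_1/b_0}e^{-(\Re\mu)\epsilon^{b_0}/2}$ — here the exponent $b_1/b_0$ on $(2/\Re\mu)$ should be read as $\Re b_1/b_0$, consistent with the "principal value" convention and with the fact that $2/\Re\mu$ is a positive real — which completes the bound.

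The only genuinely delicate point is the justification of the Gamma-integral identity with a complex exponent $\beta = b_1/b_0$ and complex $\mu$: one must check that $\int_0^\infty s^{\beta-1}e^{-\mu s}\,ds$ converges (absolutely, since $\Re\beta>0$ handles the origin and $\Re\mu>0$ handles infinity) and equals $\Gamma(\beta)\mu^{-\beta}$ with the principal branch; this follows by analytic continuation in $\mu$ from the real case, or by a contour-rotation argument, and is standard. Everything else is elementary term-by-term estimation, so I expect no real obstacle. A minor bookkeeping care is needed to make sure the substitution $s=t^{b_0}$ is applied consistently to both the finite integral and the $\Gamma$-representation so that the difference is cleanly the tail.
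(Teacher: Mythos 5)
Your proposal is correct and follows essentially the same route as the paper: identify the difference as the tail of the complete Gamma integral (the paper cites the representation $\int_0^\infty t^{b_1-1}e^{-\mu t^{b_0}}dt=\Gamma(b_1/b_0)\mu^{-b_1/b_0}/b_0$ directly, without your substitution $s=t^{b_0}$), take absolute values inside, and split off $e^{-(\Re\mu)\epsilon^{b_0}/2}$ before extending the remaining integral back to $[0,\infty)$. Your remark that the exponent $b_1/b_0$ on $(2/\Re\mu)$ should be read as $\Re b_1/b_0$ is a fair reading of the same minor abuse of notation present in the paper's own proof.
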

\begin{proof}
	Recall the integral representation of the Gamma function (see, \cite{DLMF5_9})
	\begin{equation*}
		\int_{0}^{\infty}t^{b_1-1}e^{-\mu t^{b_0}}dt=\frac{\Gamma({b_1}/{b_0})}{b_0}\mu^{-b_1/b_0}. 
	\end{equation*}
Then the left hand side of the desired estimate is an upper incomplete Gamma function, which {satisfies}
 \begin{equation*}
 	\begin{split}
 		\abs{\int_{\epsilon}^{\infty}t^{b_1-1}e^{-\mu t^{b_0}}dt}
 		&\le \int_{\epsilon}^{\infty}t^{\Re{b_1}-1}e^{-(\Re{\mu}) t^{b_0}}dt
 	\le \frac{\Gamma(\Re{b_1}/{b_0})}{b_0}(\Re\mu/2)^{-b_1/b_0} 
 	e^{-(\Re{\mu}) \epsilon^{b_0}/2}.
 	\end{split}
 \end{equation*}
\end{proof}

\subsection{Local Estimates -- Upper Bounds}\label{sec:upB}
We establish in this subsection upper bounds for the integrals at the left-hand-side of \eqref{eq:IntK}.
Recall the function $\Phi$ from \eqref{eq:CGO}. We first have 
\begin{lem}\label{lem:essentialEst}
	For any $\beta\in\RR$ and $p\in(1,\infty]$ with $\beta+2>{2}/{p}$, and any  $f\in L^p(\weg_{\theta_0,\epa})$, we have for $0<h\ll 1$ that
	\begin{equation*}
	\begin{split}
	\abs{\int_{\weg_{\theta_0,\epa}} |x|^{\beta}   e^{-\Phi/h} f(x) dx}
	&=\|f\|_{L^p(\weg_{\theta_0,\epa})}O\pare{h^{\pare{\beta+2-2/p}/{\alpha }}}.
	\end{split}
	\end{equation*}
Moreover, the estimate is also valid for $\overline{\Phi}$.
\end{lem}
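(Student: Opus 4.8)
The plan is to estimate the integral by splitting off the oscillatory-decaying weight $e^{-\Phi/h}$ and using H\"older's inequality together with the lower bound \eqref{eq:PhiBound}. First I would apply H\"older with exponents $p$ and $p'=p/(p-1)$ to obtain
\begin{equation*}
\abs{\int_{\weg_{\theta_0,\epa}} |x|^{\beta} e^{-\Phi/h} f(x)\,dx}
\le \|f\|_{L^p(\weg_{\theta_0,\epa})}\;\Bigl(\int_{\weg_{\theta_0,\epa}} |x|^{\beta p'} e^{-p'\Re\Phi(x)/h}\,dx\Bigr)^{1/p'},
\end{equation*}
with the obvious modification $L^\infty$/$L^1$ when $p=\infty$. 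So the whole task reduces to bounding the purely radial integral of $|x|^{\beta p'} e^{-p'\Re\Phi/h}$, and this is where the hypothesis $\beta+2>2/p$, i.e. $\beta p'+2>p'$ — wait, more precisely $\beta p' + 2 > 0$ after using $2/p = 2/p' \cdot (p'-1)$... — the relevant point is that $\beta p' + 2 - p' \cdot(\text{something})$; the clean statement is that $\beta p' > -2$, which is exactly $\beta + 2 > 2/p$ rewritten, guaranteeing local integrability of $|x|^{\beta p'}$ near the origin in $\RR^2$.

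Next I would pass to polar coordinates $x = r(\cos\theta,\sin\theta)$ on $\weg_{\theta_0}$, where $\theta$ ranges over a fixed interval of length $2(\pi-\theta_0)$. Using \eqref{eq:PhiBound}, namely $\Re\Phi(x)\ge \delta r^{\alpha}$ on $\weg_{\theta_0}$ (valid since $\alpha\in(0,\tfrac12\pi/\theta_0)$), the angular integration just contributes a constant and we are left with
\begin{equation*}
\int_{\weg_{\theta_0,\epa}} |x|^{\beta p'} e^{-p'\Re\Phi/h}\,dx
\le C\int_0^{\epa} r^{\beta p'+1} e^{-\delta p' r^{\alpha}/h}\,dr.
\end{equation*}
This one-dimensional integral is precisely of the form handled by Lemma~\ref{lem:Gamma} (with $b_0=\alpha$, $b_1 = \beta p'+2$, $\mu = \delta p'/h$, and $\epsilon=\epa$), or one can simply substitute $t = r^{\alpha}$ to recognize it as an incomplete Gamma integral. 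Either way it equals $\tfrac1\alpha\,\Gamma\!\bigl(\tfrac{\beta p'+2}{\alpha}\bigr)(\delta p'/h)^{-(\beta p'+2)/\alpha}$ plus an exponentially small remainder of order $e^{-\delta p' \epa^{\alpha}/(2h)}$; the condition $\beta p'+2>0$ is what makes $\Gamma\bigl(\tfrac{\beta p'+2}{\alpha}\bigr)$ finite. Hence the radial integral is $O\bigl(h^{(\beta p'+2)/\alpha}\bigr)$, and taking the $1/p'$-th power yields $O\bigl(h^{(\beta p'+2)/(\alpha p')}\bigr) = O\bigl(h^{(\beta + 2/p' \cdot \ldots)/\alpha}\bigr)$; carrying out the arithmetic $\tfrac{\beta p'+2}{p'} = \beta + \tfrac{2}{p'} = \beta + 2 - \tfrac{2}{p}$ gives exactly the claimed exponent $h^{(\beta+2-2/p)/\alpha}$.

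Finally, the same argument applies verbatim with $\Phi$ replaced by $\overline{\Phi}$, since $\Re\overline{\Phi} = \Re\Phi$ and \eqref{eq:PhiBound} only involves the real part. I do not anticipate a genuine obstacle here: the only point requiring a little care is checking that $\beta + 2 > 2/p$ translates to the integrability condition $\beta p' + 2 > 0$ at the origin — in the borderline-weight regime ($\beta<0$) one must be sure the power of $r$ after the H\"older split stays above $-2$ — and that the exponentially small tail from Lemma~\ref{lem:Gamma} is dominated by the $h^{(\beta+2-2/p)/\alpha}$ main term as $h\to 0^+$, which holds because any positive power of $h$ beats $e^{-c/h}$.
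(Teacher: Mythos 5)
Your argument is correct and matches the paper's proof essentially step for step: H\"older's inequality with the conjugate exponent, the lower bound \eqref{eq:PhiBound} on $\Re\Phi$ (which also covers $\overline{\Phi}$), polar coordinates, and Lemma~\ref{lem:Gamma} applied to the resulting radial integral, with the arithmetic $(\beta p'+2)/p'=\beta+2-2/p$ giving the stated exponent. The integrability bookkeeping $\beta p'+2>0\Leftrightarrow\beta+2>2/p$ is also exactly the paper's condition, so there is nothing to add.
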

\begin{proof}
	Applying \eqref{eq:PhiBound} and Lemma~\ref{lem:Gamma} we obtain that
	\begin{equation*}
	\begin{split}
	\abs{\int_{\weg_{\theta_0,\epa}}
		|x|^{\beta}e^{-\Phi/h} dx}
	\le& {2\theta_0}\int_{0}^{\epa}r^{\beta+1}\esdr  \,dr
	=2\theta_0\frac{\Gamma((\beta+2)/\alpha)}{\alpha\delta^{(\beta+2)/\alpha}}h^{(\beta+2)/\alpha }+O\pare{h^{\beta_0}e^{-\epa^\alpha  \delta /(2h)}}
	\end{split}
	\end{equation*}
	with some finite power $\beta_0$ (independent of $h$), which proves the result for $p=\infty$.
	Similarly for $p\in(1,\infty)$,
	\begin{equation*}
	\begin{split}
	\abs{\int_{\weg_{\theta_0,\epa}} |x|^{\beta}   e^{-\Phi/h} fdx}
	&\le   
	\int_{\weg_{\theta_0,\epa}} \esdr r^{\beta}  |f| \,dx
	\le  \|f\|_{L^p(\weg_{\theta_0,\epa})}
	\,\|\esdr \,r^{\beta}\|_{L^{p^*}(\weg_{\theta_0,\epa})},
	\end{split}
	\end{equation*}
	where $1/p^*+1/p=1$.
	By Lemma~\ref{lem:Gamma} again we have
	\begin{equation*}
	\begin{split}
	\int_{\weg_{\theta_0,\epa}}e^{-p^* \delta\sr /h }\,r^{p^*\beta}\,dx
	&=
	\theta_1\pare{
		\frac{\Gamma\pare{\frac{p^*\beta+2}{\alpha}}}{\alpha\pare{p^*\delta}^{(p^*\beta+2)/\alpha}}h^{(p^*\beta+2)/\alpha}
		+O\pare{h^{\beta_0}e^{-\epa^\alpha p^* \delta /(2h)}}
	}
	\end{split}
	\end{equation*}
	with $\beta p^*+2>0$ and $\beta_0$ a finite constant independent of $h$. Hence
	\begin{equation*}
	\|\esdr \,r^{\beta}\|_{L^{p^*}(\weg_{\theta_0,\epa})}
	\le Ch^{(\beta+2/p^*)/\alpha}=Ch^{(\beta+2-2/p)/\alpha}.
	\end{equation*}
The estimates for $\overline{\Phi}$ are verified in exactly the same way. 
\end{proof}
Applying Lemma~\ref{lem:essentialEst} we obtain
\begin{lem}\label{lem:localAll}
	Let $v$ be a real analytic function on $B_{\epa}$. 
	Denote $v_{N_0}$ as the first nonzero (and the $N_0$-th order) term from the Taylor series of $v$ at $x_0=0$, and $V_{N}$ as the first nonzero (and the $N$-th order) term from that of $\nabla v$.
	Let $w$ be a function of the form \eqref{eq:CGO} with $h\ll 1$, $\alpha\in(0,\frac{1}{2}\pi/\theta_0)$, $A\equiv 1$, $\|w_h\|_{L^{p_1}(\weg_{\theta_0,\epa})}\le Ch$ and $\nabla w_h\in L^{p_2}(\weg_{\theta_0,\epa})$ for some $p_1>2/\alpha$, $p_2>2$ and $C\ge 0$.
	Suppose that there are constants $c_j$, $C_j$ and $\beta_j\ge 0$, $j=1,2$, such that 
	\begin{equation*}
	\abs{\pare{\gamma^{-1/2}\pare{\gamma-1}-c_1}}\le C_1 r^{\beta_1}
	\AnD
	\abs{\pare{\gamma^{-1/2}\pare{\rho-1}-c_2}}\le C_2 r^{\beta_2}
	\qquad\mbox{a.e. in $\weg_{\theta_0,\epa}$}.
	\end{equation*}
	Then 
	\begin{equation*}
	\begin{split}
	&\abs{\int_{\weg_{\theta_0,\epa}}\pare{\gamma-1} \nabla v\cdot\nabla w \,dx}=
	\pare{\jump_1+h^{\beta_1/{\alpha }}
}O(h^{\pare{N+1}/{\alpha }}),
	\\&
	\abs{\int_{\weg_{\theta_0,\epa}}\pare{\gamma-1} \nabla v\cdot\nabla w \,dx
	+\frac{\jump_1}{h}\int_{\weg_{\theta_0,\epa}}
	V_N(x)\cdot (\nabla\Phi) e^{-\Phi/h} dx}
	\\&=
\pare{h^{\beta_1/{\alpha }}+\jump_1h^{1-2/(\alpha p_1)}
	+\jump_1h^{\pare{1-2/{p_2}}/{\alpha }}}
O(h^{\pare{N+1}/{\alpha }})	,
\\&
	\abs{\int_{\weg_{\theta_0,\epa}} (\rho-1) v w \,dx}
=
\pare{\jump_2+h^{\beta_2/{\alpha }}}O(h^{\pare{N_0+2}/{\alpha }}),
\\
	&\abs{\int_{\weg_{\theta_0,\epa}} (\rho-1) v w \,dx
	-c_2\int_{\weg_{\theta_0,\epa}}
	v_{N_0}(x) e^{-\Phi/h} dx}
	=\pare{h^{\beta_2/{\alpha }}+\jump_2h^{1-2/(\alpha p_1)}}O(h^{\pare{N_0+2}/{\alpha }}).
\end{split}
	\end{equation*}
Moreover, all the estimates also hold for $\overline{w}$, in which case $\Phi$ is replaced by $\overline{\Phi}$.
\end{lem}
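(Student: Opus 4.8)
\textbf{Proof plan for Lemma~\ref{lem:localAll}.}
The idea is to decompose each integrand into a ``main part'' governed by the essential jumps $c_1,c_2$ and the leading Taylor terms of $v$ and $\nabla v$, plus error terms that are either higher-order in $|x|$ or carry an extra factor of $w_h$. First I would write $\gamma-1 = \gamma^{1/2}\big(\gamma^{-1/2}(\gamma-1)\big) = \gamma^{1/2}\big(c_1 + O(r^{\beta_1})\big)$, and similarly $\rho-1 = \gamma^{1/2}\big(c_2 + O(r^{\beta_2})\big)$, using admissibility; note $\gamma^{1/2}\in W^{1,\infty}$ is bounded above and below, so it does not affect orders. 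Next, expand $v = v_{N_0} + O(r^{N_0+1})$ and $\nabla v = V_N + O(r^{N+1})$ by real-analyticity on $B_{\epa}$, and split $w = \gamma^{-1/2}(1+w_h)e^{-\Phi/h}$, with $\nabla w = \gamma^{-1/2}\big({-}h^{-1}\nabla\Phi\,(1+w_h) + \nabla w_h - \tfrac12\gamma^{-1}\nabla\gamma\,(1+w_h)\big)e^{-\Phi/h}$; the dominant contribution to $\nabla w$ is the $-h^{-1}(\nabla\Phi)e^{-\Phi/h}$ term, since $|\nabla\Phi|\sim r^{\alpha-1}$.

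With these decompositions in hand, every resulting integral is of the form $\int_{\weg_{\theta_0,\epa}}|x|^{\beta}e^{-\Phi/h}f\,dx$ with $f$ a bounded function (a Taylor coefficient times a power of $\theta$-harmonics) or an $L^p$ function (namely $w_h$ or $\nabla w_h$). For the bounded factors I apply Lemma~\ref{lem:essentialEst} with $p=\infty$, which gives a gain $h^{(\beta+2)/\alpha}$; for the factors involving $w_h$ I use Lemma~\ref{lem:essentialEst} with $p=p_1$ together with $\|w_h\|_{L^{p_1}}\le Ch$, yielding a factor $h\cdot h^{(\beta+2-2/p_1)/\alpha}$; and for $\nabla w_h$ I use $p=p_2$. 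Concretely, in $\int(\gamma-1)\nabla v\cdot\nabla w$ the principal term is $\tfrac{c_1}{h}\int \gamma^{-1/2}\gamma^{1/2}V_N\cdot(\nabla\Phi)e^{-\Phi/h}$, whose size is $h^{-1}\cdot h^{(N+(\alpha-1)+2)/\alpha}=h^{(N+1)/\alpha}$ by Lemma~\ref{lem:essentialEst} with $\beta=N+\alpha-1$; the $c_1 O(r^{\beta_1})$ correction contributes $h^{\beta_1/\alpha}\cdot h^{(N+1)/\alpha}$, the $w_h$-correction contributes $c_1 h^{1-2/(\alpha p_1)}h^{(N+1)/\alpha}$, and the $\nabla w_h$-correction $c_1 h^{(1-2/p_2)/\alpha}h^{(N+1)/\alpha}$. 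Collecting these reproduces the stated second estimate, and dropping the principal term (which is itself $O(h^{(N+1)/\alpha})$ when $c_1$ is retained as a multiplier) gives the first. The $\rho$-integrals are handled identically but \emph{without} the $h^{-1}$ and the extra power of $r$ from $\nabla\Phi$: the principal term is $c_2\int v_{N_0}e^{-\Phi/h}$ of size $h^{(N_0+2)/\alpha}$, with corrections $h^{\beta_2/\alpha}$ and $h^{1-2/(\alpha p_1)}$ relative to it, giving the third and fourth estimates. One must also check that the lower-order terms in $\nabla w$ coming from $\nabla(\gamma^{-1/2})$ and from $w_h\nabla\Phi$ are absorbed: the former is bounded times $e^{-\Phi/h}$ and so of order $h^{(N+2)/\alpha}$, negligible compared to $h^{(N+1)/\alpha}$; the latter pairs $|x|^{N+\alpha-1}w_h$ and is controlled by the $h^{1-2/(\alpha p_1)}$ term above.

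The estimates for $\overline{w}$ follow verbatim with $\Phi$ replaced by $\overline{\Phi}$, since Lemma~\ref{lem:essentialEst} and the bound \eqref{eq:PhiBound} are symmetric in $\Phi,\overline{\Phi}$ and $\overline{\nabla\Phi}=\nabla\overline{\Phi}$ has the same modulus $\sim r^{\alpha-1}$. The main obstacle, as usual in such computations, is bookkeeping: one must verify that \emph{every} discarded term is genuinely of lower order in $h$ than the term it is compared against --- in particular that the exponents $\beta_1/\alpha$, $1-2/(\alpha p_1)$ and $(1-2/p_2)/\alpha$ are all strictly positive (which holds since $\beta_1>0$, $p_1>2/\alpha$ and $p_2>2$), so that these are genuine error terms; and that the hypothesis $\beta+2>2/p$ of Lemma~\ref{lem:essentialEst} is met in each application, which it is because $N,N_0,\beta_j\ge 0$ and $N+\alpha-1+2 = N+\alpha+1 > 2-2/p_2$ for $\alpha\in(0,1)$, $p_2>2$. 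Apart from this the argument is a routine, if lengthy, application of H\"older's inequality and Lemma~\ref{lem:essentialEst}.
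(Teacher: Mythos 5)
Your proposal follows essentially the same route as the paper's proof: extract the principal terms $-\tfrac{c_1}{h}\int V_N\cdot(\nabla\Phi)e^{-\Phi/h}$ and $c_2\int v_{N_0}e^{-\Phi/h}$, and control the corrections (the $O(r^{\beta_j})$ remainders from admissibility, the $O(r^{N+1})$, $O(r^{N_0+1})$ Taylor remainders, the $\nabla\gamma^{-1/2}$ term, and the $w_h$, $\nabla w_h$ contributions) via Lemma~\ref{lem:essentialEst} with $p=\infty$, $p=p_1$ and $p=p_2$ respectively, exactly as the paper does with its decomposition into $I_{10},\dots,I_{13}$ and $I_{20},\dots,I_{23}$. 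The bookkeeping of exponents matches, so apart from cosmetic sign conventions in describing the principal term, the argument is correct and not materially different.
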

\begin{proof}
	Throughout the proof, the notation $\lesssim$ between two quantities depending on $h$, say, $f(h)$ and $g(h)$, means $f(h)\le C g(h)$ for $h$ sufficiently small, where $C$ is a positive constant independent of $h$.
	
	Denote $\vec{b}=\vec{b}_{\gamma}=-\gamma^{1/2}\nabla\gamma^{-1/2}$.
	Then for $w$ of the form \eqref{eq:CGO} we have
	\begin{equation*}
		(\gamma-1)	\nabla w = (\gamma-1)\gamma^{-1/2}e^{-\Phi/h}\pare{-(A+w_h) (\frac{1}{h}\nabla\Phi+\vec{b})+ \nabla A+\nabla w_h}
	\end{equation*}
	with 
	\begin{equation}\label{eq:gradPhi}
		\nabla \Phi
		=\alpha r^{\alpha-1}\,e^{\im (\alpha-1)\theta}\begin{bmatrix}
			1\\\im
		\end{bmatrix}.
	\end{equation}
	Since $A\equiv 1$, we can split the integral 
	\begin{equation*}
	\int_{\weg_{\theta_0,\epa}}
	\pare{\gamma-1} \nabla v({x}) \cdot\nabla w\,dx
	=-c_1I_{10}+\sum_{j=1}^{3}I_{1j},
	\end{equation*}
	where the integrals $I_{1j}$, $j=0,\ldots,3$, depending on $h$, are defined by
	\begin{equation*}
	\begin{split}
	I_{10}&:=\frac{1}{h}\int_{\weg_{\theta_0,\epa}}
	e^{-\Phi/h}\nabla\Phi\cdot V_N   dx,
	\\
	I_{11}&:=\int_{\weg_{\theta_0,\epa}}
	\pare{\gamma^{-1/2}\pare{\gamma-1}-c_1}\nabla v\cdot \pare{-(1+w_h)(\frac{1}{h}\nabla\Phi+\vec{b})+ \nabla w_h}\, e^{-\Phi/h} dx,
	\\
	I_{12}&: =\jump_1\int_{\weg_{\theta_0,\epa}}
	\pare{\nabla v -V_N}\cdot \pare{-(1+w_h)(\frac{1}{h}\nabla\Phi+\vec{b})+ \nabla w_h}\, e^{-\Phi/h} dx,
	\\
	I_{13}&: =\jump_1\int_{\weg_{\theta_0,\epa}}
	V_N\cdot \pare{-w_h(\frac{1}{h}\nabla\Phi+\vec{b})-\vec{b}+ \nabla w_h}\, e^{-\Phi/h} dx.
	\end{split}
	\end{equation*}		
	Notice in $\weg_{\theta_0,\epa}$  (in the $L^{\infty}$ sense) that 
	\begin{equation*}
	\abs{V_N}\lesssim r^N,
	\qquad \abs{\nabla\Phi}\lesssim r^{\alpha-1},
	\qquad \abs{\pare{\gamma^{-1/2}\pare{\gamma-1}-c_1}}\lesssim r^{\beta_1},
	\qquad
	\abs{\nabla v- V_N}\lesssim r^{N+1 }.
	\end{equation*}
	Applying Lemma~\ref{lem:essentialEst} with $\beta=N+\alpha-1$ and $p=\infty$ we obtain that
	\begin{equation*}
	\abs{I_{10}}\lesssim h^{\pare{N+\alpha+1}/{\alpha }-1}\lesssim h^{\pare{N+1}/{\alpha }},
	\end{equation*}
and similarly (with $\beta=N$),
\begin{equation*}
	\abs{\int_{\weg_{\theta_0,\epa}}
		V_N\cdot \vec{b} e^{-\Phi/h} dx}
	\lesssim h^{(N+2)/\alpha}.
\end{equation*}
	By Lemma~\ref{lem:essentialEst} again but with $f=w_h$ or $f=\nabla w_h$ we derive that
	\begin{equation*}
		\abs{\int_{\weg_{\theta_0,\epa}}
		 	V_N\cdot(\frac{1}{h}\nabla\Phi+\vec{b}) e^{-\Phi/h} w_hdx}
	 	\lesssim \|w_h\|_{L^{p_1}(\weg_{\theta_0,\epa})}h^{\pare{N+1-2/p_1}/{\alpha }}
	\end{equation*} 
and
\begin{equation*}
	\abs{\int_{\weg_{\theta_0,\epa}}
	e^{-\Phi/h}	V_N\cdot \nabla w_h  dx}
	\lesssim \|\nabla w_h\|_{L^{p_2}(\weg_{\theta_0,\epa})}h^{\pare{N+2-2/p_2}/{\alpha }}.
\end{equation*}
Therefore, we have deduced that
	\begin{equation*}
	\begin{split}
	\abs{I_{13}}	
&\lesssim \jump_1
\pare{h^{\pare{N+2}/{\alpha }}+
\|w_h\|_{L^{p_1}(\weg_{\theta_0,\epa})}h^{\pare{N+1-2/{p_1}}/{\alpha }}
+	\|\nabla w_h\|_{L^{p_2}(\weg_{\theta_0,\epa})}h^{\pare{N+2-2/{p_2}}/{\alpha }}}
\\&\lesssim \jump_1 h^{(N+1)/\alpha}
(h^{1-2/(\alpha p_1)}
+h^{\pare{1-2/{p_2}}/{\alpha }}).	
\end{split}
	\end{equation*}
	Similarly,
	\begin{equation*}
	\begin{split}
	\abs{I_{12}}
&\lesssim \jump_1
	\pare{h^{\pare{N+2}/{\alpha }}+
		\|w_h\|_{L^{p_1}(\weg_{\theta_0,\epa})}h^{\pare{N+2-2/{p_1}}/{\alpha }}
		+	\|\nabla w_h\|_{L^{p_2}(\weg_{\theta_0,\epa})}h^{\pare{N+3-2/{p_2}}/{\alpha }}
}
\\&\lesssim \jump_1 h^{(N+2)/\alpha},	
	\end{split}
	\end{equation*}
	and
	\begin{equation*}
	\begin{split}
	\abs{I_{11}}
&\lesssim h^{\pare{N+1+\beta_1}/{\alpha }}+
		\|w_h\|_{L^{p_1}(\weg_{\theta_0,\epa})}h^{\pare{N+1+\beta_1-2/{p_1}}/{\alpha }}
		+	\|\nabla w_h\|_{L^{p_2}(\weg_{\theta_0,\epa})}h^{\pare{N+2+\beta_1-2/{p_2}}/{\alpha }}
		\\&\lesssim h^{\pare{N+1+\beta_1}/{\alpha }}.
	\end{split}
	\end{equation*}

In the same manner we write
\begin{equation*}
\int_{\weg_{\theta_0,\epa}}
(\rho-1) v w\,dx
 =\jump_2I_{20}+\sum_{j=1}^{4}I_{2j},
\end{equation*}
where $I_{2j}$, $j=0,...,3$, depending on $h$, are defined {by}
\begin{equation*}
\begin{split}
I_{20}&:=\int_{\weg_{\theta_0,\epa}}
v_{N_0} e^{-\Phi/h} dx,
\qquad
I_{21}:=\int_{\weg_{\theta_0,\epa}}
\pare{\gamma^{-1/2}\pare{\rho-1}-c_2}  v\,(1+ w_h)\, e^{-\Phi/h} dx,
\\
I_{22}&: =\jump_2\int_{\weg_{\theta_0,\epa}}
\pare{ v -v_{N_0}}(1+ w_h)\, e^{-\Phi/h} dx,
\qquad
I_{23}: =\jump_2\int_{\weg_{\theta_0,\epa}}
v_{N_0}w_h\, e^{-\Phi/h} dx.
\end{split}
\end{equation*}
In $\weg_{\theta_0,\epa}$ we have (in the $L^{\infty}$ sense)
\begin{equation*}
\abs{v_{N_0}}\lesssim r^{N_0}\qquad  \abs{\pare{\gamma^{-1/2}\pare{\rho-1}-c_1}}\lesssim r^{\beta_2}
\qquad
\abs{v- v_{N_0}}\lesssim r^{N_0+1 }.
\end{equation*}
Applying Lemma~\ref{lem:essentialEst} twice with $\beta=N_0$ and, respectively, $f\equiv 1$ and $f=w_h$, we obtain that
\begin{equation*}
\abs{I_{20}}
\lesssim h^{\pare{N_0+2}/{\alpha }}
\AND
\abs{I_{23}}\lesssim 
 \jump_2 \|w_h\|_{L^{p_1}(\weg_{\theta_0,\epa})} h^{\pare{N_0+2-2/p_1}/{\alpha }}
\lesssim 
\jump_2h^{\pare{N_0+2+\alpha-2/p_1}/{\alpha }}.
\end{equation*}
Similarly, we have
\begin{equation*}
\abs{I_{22}}
\lesssim \jump_2 \pare{h^{\pare{N_0+3}/{\alpha }}+ \|w_h\|_{L^{p_1}(\weg_{\theta_0,\epa})} h^{\pare{N_0+3-2/p_1}/{\alpha }}}
\lesssim \jump_2 h^{\pare{N_0+3}/{\alpha }}
\end{equation*}
and
\begin{equation*}
\abs{I_{21}}\lesssim 
h^{\pare{N_0+2+\beta_2}/{\alpha }}+ 
\|w_h\|_{L^{p_1}(\weg_{\theta_0,\epa})} h^{\pare{N_0+2+\beta_2-2/p_1}/{\alpha }}
\lesssim 
h^{\pare{N_0+2+\beta_2}/{\alpha }}.
\end{equation*}

The estimates for $\overline{w}$ can be proven in the same way. 
\end{proof}

\subsection{Sharp Local Estimates}\label{sec:sharpB}
We provide sharp estimates for the integrals at the left-hand-side of \eqref{eq:IntK} for $\weg_{\theta_0,\epa}$ of aperture $2\theta_0$ with $\theta_0\in(0,\pi)\setminus\{\pi/2\}$. To that end, we need some properties of solutions to the Helmholtz {equation.}

{The following} lemma states some properties of the first two non-zero terms in the Taylor series of a solution to the Helmholtz equation. The proof is based on straightforward comparison of degrees of homogeneous polynomials in each term of the Taylor series. We choose to skip the proof here and refer the readers to \cite{BPS14,CaX21} for the arguments.  
\begin{lem}\label{lem:TaylorvV}
	Let $v$ be a solution to the Helmholtz equation {$\Delta v +k^2 v=0$} in a neighborhood of the origin.	
	For each $j\in\mathbb{N}$, denote $v_{j}(x)$ as the $j$-th order term in the Taylor series of $v$, which is a homogeneous polynomials of degree $j$, and $V_j(x)$ be that of $\nabla V$. 
	Suppose that $v_{N_0}$ is the first non-zero term in the Taylor series of $v$, and $V_{N}$ that of $\nabla v$. 
	Then 
	\begin{enumerate}
		\item The terms $v_{N_0}$, $v_{N_0+1}$, $V_{N}$ and $V_{N+1}$ are harmonic.
		\item If $v$ vanishes at the origin, that is, when $N_0\ge 1$, then $N=N_0-1$. 
		\item If $v$ is nonzero at the origin, namely, $N_0=0$, then either $N=0$ and $V_0=\nabla v_1$, or $N=1$, $v_1=0$, $V_1=\nabla v_2$ and $\nabla\cdot V_1=-k^2v_0$. 	
	\end{enumerate}
\end{lem}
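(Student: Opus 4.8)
The plan is to expand $v$ in its Taylor series at the origin, $v=\sum_{j\ge 0}v_j$ with $v_j$ a homogeneous polynomial of degree $j$, and then feed this into the Helmholtz equation $\Delta v+k^2v=0$. Since $\Delta v_j$ is homogeneous of degree $j-2$ (or zero), collecting terms of each fixed homogeneity degree yields the recursion $\Delta v_{j+2}=-k^2 v_j$ for all $j\ge 0$, with the convention $v_{-1}=v_{-2}=0$. The first claim is immediate from this: if $v_{N_0}$ is the first non-zero term, then $\Delta v_{N_0}=-k^2 v_{N_0-2}=0$ and $\Delta v_{N_0+1}=-k^2 v_{N_0-1}=0$, so both are harmonic; applying $\nabla$ (which commutes with $\Delta$ and lowers degree by one) shows the first non-zero term $V_N$ of $\nabla v$ and the next term $V_{N+1}$ are harmonic as well.

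Next I would pin down the relation between $N$ and $N_0$. If $N_0\ge 1$, then $v_{N_0}$ is a non-constant homogeneous harmonic polynomial, hence non-zero, so $\nabla v_{N_0}$ is a (vector of) homogeneous polynomials of degree $N_0-1$ that is not identically zero — one checks that a homogeneous polynomial with vanishing gradient must be constant, so degree $N_0\ge1$ forces $\nabla v_{N_0}\not\equiv0$. Since $\nabla v_j$ has degree $j-1$ and all lower-order terms of $v$ vanish, the first non-zero term of $\nabla v$ is exactly $\nabla v_{N_0}$, giving $N=N_0-1$ and $V_N=\nabla v_{N_0}$. This is item (2).

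For item (3), suppose $N_0=0$, so $v_0$ is a non-zero constant and $\nabla v_0=0$. Then the first possibly non-zero term of $\nabla v$ is $\nabla v_1$, of degree $0$. If $v_1\not\equiv 0$ then $\nabla v_1\not\equiv0$ (a non-zero linear form has non-zero gradient), so $N=0$ and $V_0=\nabla v_1$. If instead $v_1\equiv 0$, then the next candidate is $\nabla v_2$, and I must show $\nabla v_2\not\equiv 0$, i.e. $v_2\not\equiv0$: this follows from the recursion $\Delta v_2=-k^2v_0\neq 0$, which forces $v_2\not\equiv0$ and hence $\nabla v_2\not\equiv0$; thus $N=1$, $V_1=\nabla v_2$, and taking the divergence gives $\nabla\cdot V_1=\Delta v_2=-k^2 v_0$. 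Since $v_0$ is exactly the value $v(0)$ and $v_0=v_{N_0}$ in the notation used elsewhere, this matches the claimed identity.

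The argument is essentially bookkeeping with homogeneous polynomials, so I do not anticipate a genuine obstacle; the only point requiring a touch of care is the elementary fact that a homogeneous polynomial of positive degree cannot have identically vanishing gradient (equivalently, Euler's identity $x\cdot\nabla v_j = j\, v_j$), which is what rules out degenerate cases and makes the degree-counting tight. For this reason I would simply cite the computations in \cite{BPS14,CaX21} rather than reproduce them.
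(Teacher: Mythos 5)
Your argument is correct and is exactly the approach the paper has in mind (the paper omits the proof, describing it as a ``straightforward comparison of degrees of homogeneous polynomials in each term of the Taylor series'' and deferring to \cite{BPS14,CaX21}): the recursion $\Delta v_{j+2}=-k^2v_j$ together with Euler's identity gives all three items. No gaps.
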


Recall $\Phi$ given by \eqref{eq:CGO}. In the rest of this  section, denote 
\begin{equation*}
	\Phi_1=\Phi\AND \Phi_2=\overline{\Phi}.
\end{equation*}
We have the following sharp estimates.
\begin{lem}\label{lem:DecayExactv}
	Let $v_{N_0}$ be a (nontrivial) homogeneous polynomial of degree $N_0\in\mathbb{N}$ which is harmonic. Then 
	\begin{equation}\label{eq:decayIntv}
		\int_{\weg_{\theta_0,\epa}}v_{N_0}(x)e^{-\Phi_j/h} \,dx=C_{0,N_0,j}\,h^{(N_0+2)/\alpha}+o\pare{e^{-\epa^\alpha \delta /(3h)}},
		\qquad j=1,2,
	\end{equation}
	where $C_{0,N_0,j}$, $j=1,2$, are constants independent of $h$, and at least one of them is nonzero. 
\end{lem}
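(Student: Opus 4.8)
The plan is to compute the integral $\int_{\weg_{\theta_0,\epa}}v_{N_0}(x)e^{-\Phi_j/h}\,dx$ almost exactly by passing to polar coordinates, expanding the harmonic polynomial $v_{N_0}$ in the angular basis, and then recognizing the radial integral as an (almost complete) Gamma integral via Lemma \ref{lem:Gamma}. First I would write $\Phi_1(x)=r^\alpha e^{\im\alpha\theta}$ and $\Phi_2(x)=r^\alpha e^{-\im\alpha\theta}$, both with $\theta\in(-\pi,\pi)$, so that the exponent separates as $e^{-\Phi_j/h}=e^{-r^\alpha e^{\pm\im\alpha\theta}/h}$. A harmonic homogeneous polynomial of degree $N_0$ has the form $v_{N_0}(x)=r^{N_0}\bigl(a\,e^{\im N_0\theta}+\bar a\,e^{-\im N_0\theta}\bigr)$ for some $a\in\CC$ (and if $N_0=0$, simply a nonzero real constant). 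Substituting, the integral becomes a sum of terms
\begin{equation*}
\int_{-\pi+\theta_0}^{\pi-\theta_0}e^{\pm\im N_0\theta}\left(\int_0^{\epa}r^{N_0+1}e^{-r^\alpha e^{\pm\im\alpha\theta}/h}\,dr\right)d\theta .
\end{equation*}

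Next I would apply Lemma \ref{lem:Gamma} to the inner radial integral with $b_0=\alpha$, $b_1=N_0+2$, $\mu=e^{\pm\im\alpha\theta}/h$. This is legitimate because, for $\theta\in(-\pi+\theta_0,\pi-\theta_0)$ and $\alpha\in(0,\tfrac12\pi/\theta_0)$, one has $|\alpha\theta|\le\alpha(\pi-\theta_0)<\pi/2$, so $\Re\mu=\cos(\alpha\theta)/h>0$ as required; moreover $\Re\mu\ge\delta/h$ by \eqref{eq:PhiBound}. Lemma \ref{lem:Gamma} then gives
\begin{equation*}
\int_0^{\epa}r^{N_0+1}e^{-r^\alpha e^{\pm\im\alpha\theta}/h}\,dr=\frac{\Gamma((N_0+2)/\alpha)}{\alpha}\,h^{(N_0+2)/\alpha}e^{\mp\im(N_0+2)\theta}+E(h,\theta),
\end{equation*}
where the error $E$ is bounded by $C(\Re\mu)^{-(N_0+2)/\alpha}e^{-(\Re\mu)\epa^\alpha/2}\lesssim h^{-(N_0+2)/\alpha}e^{-\delta\epa^\alpha/(2h)}$ uniformly in $\theta$; integrating this over the bounded $\theta$-interval and absorbing it (the polynomial-in-$1/h$ prefactor is harmless against the exponential) yields the $o(e^{-\epa^\alpha\delta/(3h)})$ remainder in \eqref{eq:decayIntv}. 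Collecting the main terms, the constant $C_{0,N_0,j}$ equals $\frac{\Gamma((N_0+2)/\alpha)}{\alpha}$ times an explicit angular integral, e.g. for $j=1$ (so $\mu\sim e^{\im\alpha\theta}$) one gets contributions $\int_{-\pi+\theta_0}^{\pi-\theta_0}e^{\im N_0\theta}e^{-\im(N_0+2)\theta}d\theta=\int e^{-\im2\theta}d\theta$ and $\int_{-\pi+\theta_0}^{\pi-\theta_0}e^{-\im N_0\theta}e^{-\im(N_0+2)\theta}d\theta=\int e^{-\im(2N_0+2)\theta}d\theta$, each of which evaluates to an elementary multiple of $\sin$ of a half-angle.

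The main obstacle — and the only genuinely substantive point — is showing that $C_{0,N_0,1}$ and $C_{0,N_0,2}$ are not \emph{both} zero. Each $C_{0,N_0,j}$ is (a nonzero Gamma factor times) a linear combination, with coefficients $a,\bar a$, of integrals of the form $\int_{-\pi+\theta_0}^{\pi-\theta_0}e^{\im m\theta}\,d\theta=\frac{2\sin(m(\pi-\theta_0))}{m}$ for $m\in\{-2,2,\pm(2N_0+2)\}$ (with the $m=0$ case replaced by the length $2(\pi-\theta_0)$, relevant when $N_0\in\{0,1\}$). The key arithmetic fact is that $\sin(2(\pi-\theta_0))=-\sin2\theta_0\ne0$ precisely because $\theta_0\in(0,\pi)\setminus\{\pi/2\}$ — this is exactly where the hypothesis $\theta_0\ne\pi/2$ enters. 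I would argue: if $N_0=0$ then $v_0=a+\bar a=2\Re a\ne0$ and $C_{0,0,j}$ is dominated by (indeed equals, up to the nonvanishing $\int e^{\mp\im2\theta}$ term) a nonzero multiple of $\sin2\theta_0$, so it is nonzero. For $N_0\ge1$, suppose both $C_{0,N_0,1}=C_{0,N_0,2}=0$; this forces a homogeneous linear system in $(a,\bar a)$ whose coefficient determinant, a product/combination of the factors $\sin(2(\pi-\theta_0))$ and $\sin((2N_0+2)(\pi-\theta_0))$ divided by the respective $m$'s, must vanish. One checks by the sine-of-multiple-angle relations that this determinant cannot vanish for $\theta_0\in(0,\pi)\setminus\{\pi/2\}$ unless $a=0$, contradicting nontriviality of $v_{N_0}$; I would record the short trigonometric computation verifying non-degeneracy, treating separately the possibility that $\sin((2N_0+2)(\pi-\theta_0))=0$ (in which case the surviving $\sin2\theta_0$-term alone is nonzero). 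This completes the proof.
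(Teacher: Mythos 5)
Your proposal follows the paper's own proof essentially step for step: polar coordinates, the basis $r^{N_0}e^{\pm\im N_0\theta}$ for harmonic homogeneous polynomials, Lemma~\ref{lem:Gamma} applied to the radial integral with $\mu=e^{\pm\im\alpha\theta}/h$, explicit angular integrals, and a $2\times 2$ homogeneous linear system whose determinant must vanish if both constants do. The computation you defer (``one checks by the sine-of-multiple-angle relations'') is precisely the paper's final step: vanishing of the determinant forces $\sin(2\theta_0)=\pm\sin\bigl(2(N_0+1)\theta_0\bigr)/(N_0+1)$, which for $N_0\ge 1$ contradicts $|\sin(nt)|\le n|\sin t|$ (equality only when $\sin t=0$) together with $\theta_0\in(0,\pi)\setminus\{\pi/2\}$; the degenerate case $N_0=0$ is handled directly, as you do. So the approach is the same and the skeleton is sound.

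One step as written would fail and needs repair. You integrate $\theta$ over $(-\pi+\theta_0,\pi-\theta_0)$, following the displayed set in the definition of $\weg_{\theta_0}$; but all of the paper's computations, the standing constraint $\alpha\in(0,\tfrac12\pi/\theta_0)$, and the lower bound \eqref{eq:PhiBound} presuppose the sector $\{|\theta|<\theta_0\}$ of aperture $2\theta_0$ (the displayed set in the definition is inconsistent with the stated aperture). On your range the verification of the hypothesis $\Re\mu>0$ of Lemma~\ref{lem:Gamma} breaks down: the inequality $\alpha(\pi-\theta_0)<\pi/2$ is false whenever $\theta_0<\pi/2$ and $\alpha$ is near $\min\{1,\pi/(2\theta_0)\}$, in which case $\cos(\alpha\theta)$ changes sign across the sector and the Gamma asymptotics (and the exponential error bound) no longer apply. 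On the range $|\theta|<\theta_0$ one has $|\alpha\theta|<\alpha\theta_0<\pi/2$ and everything closes. Since the angular frequencies that occur, $\pm2$ and $\pm(2N_0+2)$, are all even, $\sin\bigl(m(\pi-\theta_0)\bigr)=-\sin(m\theta_0)$, so your final constants differ from the paper's only by a sign and the non-vanishing analysis is unaffected once the range is corrected.
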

\begin{proof}
	Applying  Lemma~\ref{lem:Gamma} we {have}
	\begin{equation*}
		\begin{split}
			&	\int_{\weg_{\theta_0,\epa}}r^{N_0}e^{\pm\im  N_0\theta}e^{-\Phi_j/h}  \,dx	
			=\int_{-\theta_0}^{\theta_0}e^{\pm\im  N_0\theta}
			\int_{0}^{\epa}r^{N_0+1}e^{-r^\alpha (\cos\alpha\theta-(-1)^j\im\sin\alpha\theta)/h} drd\theta
			\\&
			=\alpha^{-1}\Gamma((N_0+2)/\alpha){h}^{(N_0+2)/\alpha}
			\int_{-\theta_0}^{\theta_0} 
			e^{\pm\im  N_0\theta}e^{(-1)^j\im \alpha\theta (N_0+2)/\alpha}   d\theta
			+O\pare{h^{\beta_0} e^{-\epa^\alpha \delta /(2h)}}
			\\&=
			\frac{	-\im\Gamma((N_0+2)/\alpha){h}^{(N_0+2)/\alpha}}{\pare{((-1)^j\pm 1)N_0+(-1)^j 2}\alpha}
			\pare{ e^{\im  (((-1)^j\pm 1)N_0+(-1)^j 2)\theta_0} -e^{-\im  (((-1)^j\pm 1)N_0+(-1)^j 2)\theta_0}}
			\\&\qquad	+o\pare{  e^{-\epa^\alpha \delta /(3h)}},
		\end{split}	  
	\end{equation*}
	where $\beta_0$ is some constant independent of $h$.
	Hence for any constants $B_1$ and {$B_2$,}
	\begin{equation*}
		\begin{split}
			&	\frac{\im\alpha}{\Gamma((N_0+2)/\alpha)\,h^{(N_0+2)/\alpha}}
			\int_{\weg_{\theta_0,\epa}}r^{N_0}
			\pare{B_1e^{\im  N_0\theta}+B_2e^{-\im  N_0\theta}}e^{-\Phi_j/h}  \,dx	
			\\&=
			\frac{(-1)^j}{ 2}	
			\pare{{B_j}
				( e^{(-1)^j\im   2\theta_0} -e^{-(-1)^j\im   2\theta_0})
				+B_{j+1}\frac{e^{(-1)^j\im 2 (N_0+1 )\theta_0}-e^{-(-1)^j\im 2 (N_0+1 )\theta_0}}{N_0+1}}
		\\&\qquad	+o\pare{  e^{-\epa^\alpha \delta /(3h)}},
		\end{split}	  
	\end{equation*}	
	where $B_{2+1}:=B_1$.
	Thus, \eqref{eq:decayIntv} is verified by taking $v_{N_0}=B_1e^{\im  N_0\theta}+B_2e^{-\im  N_0\theta}$.
	It is left to show the nontriviality of the constants $C_{0,N_0,j}$, $j=1,2$. Recall that $B_1^2+B_2^2\neq 0$. If $C_{0,N_0,1}=C_{0,N_0,2}=0$, {then }
	\begin{equation*}
		\begin{split}
			0&=\det\pare{\begin{matrix}
					e^{-2\im\theta_0}-e^{2\im\theta_0}
					&\dfrac{e^{-2\im(N_0+1)\theta_0}-e^{2\im(N_0+1)\theta_0}}{N_0+1}
					\\-\dfrac{e^{-2\im(N_0+1)\theta_0}-e^{2\im(N_0+1)\theta_0}}{N_0+1}
					& -(e^{-2\im\theta_0}-e^{2\im\theta_0})
			\end{matrix}}
.
		\end{split}
	\end{equation*}
	That is,
	\begin{equation*}
		\sin\theta_0=\pm\frac{\sin(N_0+1)\theta_0}{N_0+1},
	\end{equation*}
	which is impossible since $\theta_0\in(0,\pi)\setminus\{\pi/2\}$. 
\end{proof}
\begin{rem}
	We observe from the proof that, in the case when $v_{N_0}\equiv 1$ ($N_0=0$), the constants $C_{0,N_0,j}$ are given by
	\begin{equation*}
		C_{0,0,j}=\frac{\Gamma(2/\alpha)}{(-1)^j 2\im\alpha}
		( e^{(-1)^j\im   2\theta_0} -e^{ -(-1)^j \im2\theta_0}),\qquad j=1,2.
	\end{equation*}
\end{rem}
\begin{lem}\label{lem:DecayExact}
	Given $N\in \mathbb{N}$, let $V=V_N(x)$ be the gradient of a homogeneous polynomial of degree $N+1$ which is harmonic. 
	Suppose that $V$ is not identically zero.
	Then we have 
	\begin{equation}\label{eq:decayInt}
	-\frac{1}{h}\int_{\weg_{\theta_0,\epa}}e^{-\Phi_j/h} V\cdot \nabla\Phi_j \,dx=C_{1,j}\,{{h}^{(N+1)/\alpha}+o\pare{e^{-\epa^{\alpha} \delta /(3h)}}},\qquad j=1,2
	\end{equation}
	with constants $\beta_0$ and $C_{1,j}$ independent of ${h}$, $j=1,2$.
	Moreover, $C_{1,1}=C_{1,2}=0$,  if and only if the aperture of the corner
	\begin{equation}\label{eq:Angle}
	2\theta_0=\frac{l\pi}{1+N}\in(0,2\pi)\backslash\{\pi\},\qquad   \mbox{i.e., } N=\frac{\pi}{2\theta_0}l-1\in\mathbb{N}_+,
	\end{equation}
	for some $l\in\mathbb{N}_+$.
\end{lem}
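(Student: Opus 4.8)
The plan is to collapse the integrand of \eqref{eq:decayInt} to a single complex monomial and then evaluate the resulting elementary integral with Lemma~\ref{lem:Gamma}, much as in the proof of Lemma~\ref{lem:DecayExactv}. Throughout I use the complex notation $z=x_1+\im x_2$, write $\nabla f=(\dd_{x_1}f,\dd_{x_2}f)$ for complex-valued $f$, and use the \emph{bilinear} pairing $(a_1,a_2)\cdot(b_1,b_2)=a_1b_1+a_2b_2$.

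First I would parametrize $V_N$. A homogeneous harmonic polynomial of degree $N+1\ge 1$ is a complex-linear combination of $z^{N+1}$ and $\overline{z}^{\,N+1}$, so $V_N=\nabla\pare{A_1z^{N+1}+A_2\overline{z}^{\,N+1}}=(N+1)\pare{A_1z^N(1,\im)+A_2\overline{z}^{\,N}(1,-\im)}$ for some $A_1,A_2\in\CC$, and $V_N\not\equiv 0$ amounts to $(A_1,A_2)\neq(0,0)$. By \eqref{eq:gradPhi} we have $\nabla\Phi_1=\alpha z^{\alpha-1}(1,\im)$ and $\nabla\Phi_2=\overline{\nabla\Phi_1}=\alpha\overline{z}^{\,\alpha-1}(1,-\im)$. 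Since $(1,\im)\cdot(1,\im)=(1,-\im)\cdot(1,-\im)=0$ while $(1,\im)\cdot(1,-\im)=2$, the self-terms drop and only the cross-term survives:
\[
V_N\cdot\nabla\Phi_1=2\alpha(N+1)A_2\,z^{\alpha-1}\overline{z}^{\,N}=2\alpha(N+1)A_2\,r^{N+\alpha-1}e^{\im(\alpha-1-N)\theta},
\]
and symmetrically $V_N\cdot\nabla\Phi_2=2\alpha(N+1)A_1\,r^{N+\alpha-1}e^{-\im(\alpha-1-N)\theta}$.

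Next I would substitute this into the integral over $\weg_{\theta_0,\epa}$, pass to polar coordinates ($dx=r\,dr\,d\theta$, $r\in(0,\epa)$, $\theta\in(-\theta_0,\theta_0)$), and separate the radial and angular integrations. The radial factor is $\int_0^{\epa}r^{N+\alpha}e^{-r^{\alpha}e^{\pm\im\alpha\theta}/h}\,dr$; applying Lemma~\ref{lem:Gamma} with $b_0=\alpha$, $b_1=N+\alpha+1$ and $\mu=e^{\pm\im\alpha\theta}/h$ — legitimate since $\Re\mu=\cos(\alpha\theta)/h\ge\cos(\alpha\theta_0)/h=\delta/h>0$ because $\abs{\alpha\theta}\le\alpha\theta_0<\pi/2$ — this integral equals $\alpha^{-1}\Gamma\pare{(N+1)/\alpha+1}\,h^{(N+1)/\alpha+1}e^{\mp\im(N+1+\alpha)\theta}$ up to an error $O\pare{h^{\beta_0}e^{-\delta\epa^{\alpha}/(2h)}}$ (with a fixed power $\beta_0$), uniformly in $\theta$. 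The angular integral then reduces to $\int_{-\theta_0}^{\theta_0}e^{\pm\im(\alpha-1-N)\theta}e^{\mp\im(N+1+\alpha)\theta}\,d\theta=\int_{-\theta_0}^{\theta_0}e^{\mp2\im(N+1)\theta}\,d\theta=\frac{\sin\pare{2(N+1)\theta_0}}{N+1}$, and the $1/h$ in front of \eqref{eq:decayInt}, the power $h^{(N+1)/\alpha+1}$ and the factor $N+1$ combine to the claimed $h^{(N+1)/\alpha}$; the error term, multiplied by $1/h$ and integrated over the bounded interval in $\theta$, is $o\pare{e^{-\delta\epa^{\alpha}/(3h)}}$. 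This yields \eqref{eq:decayInt} with $h$-independent constants
\[
C_{1,1}=-2A_2\,\Gamma\pare{(N+1)/\alpha+1}\sin\pare{2(N+1)\theta_0},\qquad C_{1,2}=-2A_1\,\Gamma\pare{(N+1)/\alpha+1}\sin\pare{2(N+1)\theta_0}.
\]

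Finally, for the dichotomy: since $\Gamma\pare{(N+1)/\alpha+1}\neq 0$, both $C_{1,j}$ vanish iff $A_1\sin\pare{2(N+1)\theta_0}=A_2\sin\pare{2(N+1)\theta_0}=0$, and because $(A_1,A_2)\neq(0,0)$ this holds iff $\sin\pare{2(N+1)\theta_0}=0$, i.e.\ $2(N+1)\theta_0\in\pi\mathbb{Z}$. Using $\theta_0\in(0,\pi)\setminus\{\pi/2\}$, so $2\theta_0\in(0,2\pi)$ and $2\theta_0\neq\pi$, this is exactly $2\theta_0=l\pi/(N+1)$ for some $l\in\mathbb{N}_+$ with $l\neq N+1$, which is \eqref{eq:Angle}. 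The only points needing care are the branch and sign bookkeeping and the elementary but crucial observation that it is the cross-term $(1,\im)\cdot(1,-\im)=2$ — not the vanishing self-terms — that makes the integral nontrivial; there is no substantial analytic difficulty beyond invoking Lemma~\ref{lem:Gamma}.
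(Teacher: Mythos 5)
Your proposal is correct and follows essentially the same route as the paper: decompose $V_N$ in the harmonic basis $z^N(1,\im)$, $\overline{z}^{\,N}(1,-\im)$, observe that only the cross-term with $\nabla\Phi_j$ survives the bilinear pairing, reduce to a single monomial $r^{N+\alpha-1}e^{\pm\im(\alpha-1-N)\theta}$, and evaluate via Lemma~\ref{lem:Gamma} and the elementary angular integral $\int_{-\theta_0}^{\theta_0}e^{\mp 2\im(N+1)\theta}\,d\theta$. Your constants $C_{1,j}$ and the characterization of $C_{1,1}=C_{1,2}=0$ via $\sin(2(N+1)\theta_0)=0$ agree with the paper's.
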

\begin{rem}\label{rem:N=0}
	We observe from \eqref{eq:Angle} that, if $N=0$ then $C_{1,1}$ and $C_{1,2}$ can never be both zero.
	In the case if $N=1$, the only possibility for $C_{1,1}=C_{1,2}=0$ is when $\theta_1=\pi/2$ or $3\pi/2$.
\end{rem}
\begin{rem}\label{rem:robustA}
	We note that, the situation $C_{1,1}=C_{1,2}=0$, that is, when $2\theta_0$ and $N$ satisfies the relation \eqref{eq:Angle}, is ``invariant'' under different (admissible) choices of the holomorphic function $A$, as in \eqref{eq:CGO} for $w$. More precisely, when \eqref{eq:Angle} is satisfied, by the same arguments as in the proof of Lemma~\ref{lem:DecayExact} we have for $A$ with $\db A=0$ that 
	\begin{equation*}
	\int_{\weg_{\theta_0,\epa}} V\cdot \nabla (A_j e^{-\Phi_j/h})  \,dx=o\pare{h^{(N+1)/\alpha}}, \qquad j=1,2,
	\end{equation*}
with $A_1=A$ and $A_2=\overline{A}$, provided that $A(x)=O(1)$ near the corner.
If $A(x)=O(|x|^{\beta})$ around the corner, then the left hand side of the above relation is replaced by $o(h^{(N+1+\beta)/\alpha})$. 
We 
omit the proof of this fact to keep our main context concise. 
\end{rem}
\begin{proof}[Proof of Lemma~\ref{lem:DecayExact}]
	Recall that $r^{N}e^{\pm\im N\theta}$ form a basis of all homogeneous harmonic polynomial of degree $N$, where $x_1$ and $x_2$ denote the Cartesian components of $x\in\RR^2$. Then we can write the vector field $V$ as
	\begin{equation*}
	\begin{split}
	V(x)&=b_1 \begin{bmatrix}
		1\\\im
		\end{bmatrix}r^N e^{\im N\theta}
	+b_2\begin{bmatrix}
		1\\-\im
		\end{bmatrix}r^N e^{-\im N\theta}.
	\end{split}
	\end{equation*}
	Recalling \eqref{eq:gradPhi} we have 
	\begin{equation*}
		\begin{split}
			V\cdot \nabla\Phi_j
			=&2\alpha b_{j+1}\,r^{N-1+\alpha}  e^{(-1)^{j}\im (N+1-\alpha)\theta}
			\qquad j=1,2,
		\end{split}
	\end{equation*}
where $b_{2+1}:=b_1$.
	Applying Lemma~\ref{lem:Gamma} {yields}
	\begin{equation*}
	\begin{split}
	&\frac{1}{2b_{j+1}\alpha h}\int_{\weg_{\theta_0,\epa}}e^{-\Phi_j/h} V\cdot \nabla\Phi_j dx
	\\=&
	\Gamma(\frac{N+1}{\alpha}+1)
	\,h^{(N+1)/\alpha}
	\int_{-\theta_0}^{\theta_0}e^{(-1)^{j}\im 2(N+1) \theta}d \theta 
	+ o\pare{e^{-\epa^\alpha  \delta /(3h)}}
	\end{split}	  
	\end{equation*}
with some finite power $\beta_0$ (independent of $h$).
	Then \eqref{eq:decayInt} is verified by the simple fact that
	\begin{equation*}
	\begin{split}
	\int_{-\theta_0}^{\theta_0}e^{(-1)^{j}\im 2(N+1) \theta}d \theta 
	=	 -\im\frac{(-1)^{j}}{2 (N+1)}	\pare{e^{(-1)^{j}\im 2(N+1) \theta_0}-e^{-(-1)^{j}\im 2(N+1) \theta_0}},
	\end{split}
	\end{equation*}
with the constants
\begin{equation*}
	C_{1,j}=\im (-1)^{j} \frac{b_{j+1}\alpha}{N+1}	\pare{e^{(-1)^{j}\im 2(N+1) \theta_0}-e^{-(-1)^{j}\im 2(N+1) \theta_0}},\qquad j=1,2.
\end{equation*}
Finally, for either $j=1$ or $2$, $C_{1,j}=0$ if and only if $b_{j+1}=0$ or $e^{(-1)^{j}\im 4(N+1) \theta_0}=1$, where the latter is equivalent to that $4(N+1)\theta_0=2l\pi$ for some integer $l$.
The proof is complete. 
\end{proof}

\begin{lem}\label{lem:DecayExact3}
	Let $V(x)$ be the gradient of a homogeneous polynomial of degree $2$ and satisfy $\nabla\cdot V=-k^2v_0\ne 0$. 
	Given two constants $\jump_1$ and $\jump_2$ with $\jump_1\neq 0$, 
	then for each $j=1,2$ we have
	\begin{equation}\label{eq:decayInt3}
	-	\frac{\jump_1}{h}\int_{\weg_{\theta_0,\epa}} e^{-\Phi_j/h} \,V\cdot \nabla\Phi_j \,dx 
	-k^2v_0\jump_2\int_{\weg_{\theta_0,\epa}} e^{-\Phi_j/h}  \,dx=C_{2,j} h^{2/\alpha}+o\pare{  e^{-\epa^\alpha \delta /(3h)}}, 
	\end{equation}
	with some constant 
	$C_{2,j}$ independent of $h$. 	
	Moreover, 
	$C_{2,1}=C_{2,2}=0$ happens only when either $2\theta_0\in\{\pi/2,3\pi/2\}$ and $\jump_1=\jump_2$, or $2\theta_0\notin\{\pi/2,3\pi/2\}$ and
	\begin{equation*}
	V(x)=-\frac{1}{4}k^2v_0\nabla ((1-c_0)x_1^2+(1+c_0)x_2^2)
	\qquad\mbox{with}\quad c_0=\frac{1-\jump_2/\jump_1}{\cos2\theta_0}.
	\end{equation*}
\end{lem}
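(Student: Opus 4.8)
The plan is to reduce \eqref{eq:decayInt3} to the two asymptotic identities already established, namely Lemma~\ref{lem:DecayExactv} with $N_0=0$ and Lemma~\ref{lem:DecayExact} with $N=1$, by exploiting the homogeneity of $\Phi_j$ and splitting $V$ into its harmonic and radial parts. Since $V=\nabla v_2$ for some homogeneous polynomial $v_2$ of degree $2$ with $\nabla\cdot V=\Delta v_2\equiv -k^2v_0$, I would write $v_2=p_2+q_2$, where $q_2:=-\tfrac14 k^2v_0\,|x|^2$ is a particular solution and $p_2$ is a homogeneous \emph{harmonic} polynomial of degree $2$; thus $V=\nabla p_2-\tfrac12 k^2v_0\,x$.

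For the radial piece $-\tfrac12 k^2v_0\,x$ the key is the identity $\tfrac{\alpha}{h}\int_{\weg_{\theta_0,\epa}}\Phi_j e^{-\Phi_j/h}\,dx=2\int_{\weg_{\theta_0,\epa}}e^{-\Phi_j/h}\,dx+o\bigl(e^{-\epa^\alpha\delta/(3h)}\bigr)$, which follows either from a direct computation with Lemma~\ref{lem:Gamma}, or conceptually from Euler's identity $x\cdot\nabla\Phi_j=\alpha\Phi_j$ (so $x\cdot\nabla(e^{-\Phi_j/h})=-\tfrac{\alpha}{h}\Phi_j e^{-\Phi_j/h}$) together with the divergence theorem on $\weg_{\theta_0,\epa}$ — on the two straight edges $x\cdot\nu=0$, and on the arc $\partial B_{\epa}\cap\weg_{\theta_0}$ the factor $e^{-\Phi_j/h}$ is $O\bigl(e^{-\delta\epa^{\alpha}/h}\bigr)$ by \eqref{eq:PhiBound}. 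Hence the contribution of $-\tfrac12 k^2v_0\,x$ to the first integral in \eqref{eq:decayInt3} combines with the $\rho$-term into $k^2v_0(\jump_1-\jump_2)\int_{\weg_{\theta_0,\epa}}e^{-\Phi_j/h}\,dx$, which by Lemma~\ref{lem:DecayExactv} equals $k^2v_0(\jump_1-\jump_2)\,C_{0,0,j}\,h^{2/\alpha}+o(\cdots)$, where $C_{0,0,j}=\Gamma(2/\alpha)\sin(2\theta_0)/\alpha$ is independent of $h$ (and of $j$) and nonzero since $2\theta_0\in(0,2\pi)\setminus\{\pi\}$. The harmonic piece $\nabla p_2$ is exactly of the form covered by Lemma~\ref{lem:DecayExact} with $N=1$, so $-\tfrac1h\int_{\weg_{\theta_0,\epa}}(\nabla p_2)\cdot\nabla\Phi_j\,e^{-\Phi_j/h}\,dx=C_{1,j}h^{2/\alpha}+o(\cdots)$. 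Adding the two contributions establishes \eqref{eq:decayInt3} with the $h$-independent constant $C_{2,j}=\jump_1 C_{1,j}+k^2v_0(\jump_1-\jump_2)C_{0,0,j}$.

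For the characterization of when $C_{2,1}=C_{2,2}=0$ I would expand $\nabla p_2=b_1 r e^{\im\theta}(1,\im)^{T}+b_2 r e^{-\im\theta}(1,-\im)^{T}$. From the computation in the proof of Lemma~\ref{lem:DecayExact} (with $N=1$) one gets $C_{1,1}=-\Gamma(2/\alpha+1)\,b_2\sin(4\theta_0)$ and $C_{1,2}=-\Gamma(2/\alpha+1)\,b_1\sin(4\theta_0)$, so $C_{2,1}-C_{2,2}$ is a fixed nonzero constant times $\jump_1(b_1-b_2)\sin(4\theta_0)$, while $C_{2,2}$ is a fixed nonzero constant times $-2\jump_1 b_1\sin(4\theta_0)+k^2v_0(\jump_1-\jump_2)\sin(2\theta_0)$. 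Thus $C_{2,1}=C_{2,2}=0$ forces either $\sin(4\theta_0)=0$ — which on $(0,2\pi)\setminus\{\pi\}$ means $2\theta_0\in\{\pi/2,3\pi/2\}$, and then, since $\sin(2\theta_0)\ne0$ and $k^2v_0\ne0$, $\jump_1=\jump_2$ — or $\sin(4\theta_0)\ne0$, in which case $b_1=b_2$ and substituting back gives $b_1=b_2=k^2v_0(\jump_1-\jump_2)/(4\jump_1\cos(2\theta_0))=\tfrac14 k^2v_0 c_0$. In this last case $b_1=b_2$ forces $\nabla p_2=\tfrac14 k^2v_0 c_0\,\nabla(x_1^2-x_2^2)$, whence $V=\nabla p_2+\nabla q_2=-\tfrac14 k^2v_0\,\nabla\bigl((1-c_0)x_1^2+(1+c_0)x_2^2\bigr)$, exactly as claimed; the converse (that these two cases do produce $C_{2,1}=C_{2,2}=0$) is immediate from the displayed formula for $C_{2,j}$.

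There is no deep difficulty here: the work is the careful bookkeeping of the $\Gamma$- and $\alpha$-factors emerging from Lemmas~\ref{lem:DecayExactv} and \ref{lem:DecayExact}, arranged so that the coefficient collapses to the clean value $\tfrac14 k^2v_0 c_0$, together with the elementary case split on whether $\sin(4\theta_0)$ vanishes. The single point needing attention is the Euler/divergence-theorem reduction of the radial part: one must check that the straight-edge boundary term vanishes identically (there $\nu\perp x$) and that the arc term is $o\bigl(e^{-\epa^\alpha\delta/(3h)}\bigr)$, both of which follow from \eqref{eq:PhiBound}.
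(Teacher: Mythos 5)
Your proof is correct, and the endgame — writing $C_{2,j}$ as an explicit linear combination, subtracting $C_{2,1}-C_{2,2}$ to isolate $b_1-b_2$, and splitting on whether $\sin 4\theta_0$ (equivalently $\cos2\theta_0$) vanishes — is the same as the paper's. The route to the constants is genuinely different in execution. The paper parametrizes $V=Bx$ with $B$ symmetric, computes $V\cdot\nabla\Phi_j=\tfrac{\alpha}{2} r^{\alpha}\bigl(b_je^{(-1)^j\im(2-\alpha)\theta}-k^2v_0e^{-(-1)^j\im\alpha\theta}\bigr)$ directly, and evaluates both angular harmonics from scratch via Lemma~\ref{lem:Gamma}; the harmonic/radial split appears only implicitly as the two angular frequencies. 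You make the split explicit, $V=\nabla p_2-\tfrac12k^2v_0\,x$ with $p_2$ harmonic, reuse Lemma~\ref{lem:DecayExact} (with $N=1$) as a black box for $\nabla p_2$, and convert the radial piece into $\jump_1k^2v_0\int e^{-\Phi_j/h}dx$ via Euler's identity $x\cdot\nabla\Phi_j=\alpha\Phi_j$ plus the divergence theorem — correctly noting that the straight edges contribute nothing ($x\cdot\nu=0$) and the arc is exponentially small by \eqref{eq:PhiBound} (a limiting argument near the origin is needed since $|\nabla\Phi_j|\sim r^{\alpha-1}$, but this is harmless as $r^{\alpha}$ is integrable). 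This buys a computation-free treatment of the radial term and makes the cancellation structure $\jump_1-\jump_2$ visible before any Gamma function appears; the paper's version is self-contained and avoids integration by parts. Your constants are consistent with the paper's under the dictionary $b_j^{\mathrm{paper}}=b_{11}-b_{22}-(-1)^j2\im b_{12}=4\,b_{j+1}^{\mathrm{yours}}$, and both conditions reduce to $4\jump_1 b_{j+1}\cos2\theta_0=k^2v_0(\jump_1-\jump_2)$, yielding the stated dichotomy and the value $b_1=b_2=\tfrac14k^2v_0c_0$, hence the claimed form of $V$.
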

\begin{proof}
We can write
	$V(x)=\frac{1}{2}\nabla(x^TBx)=Bx$ where $B=(b_{jk})$ is a $2$-by-$2$ symmetric matrix with $b_{11}+b_{22}=-k^2v_0$.
	Recall from \eqref{eq:gradPhi} {that}
	\begin{equation*}
		\begin{split}
			V\cdot \nabla \Phi_j
			&
			=\alpha r^{\alpha-1}\,e^{-(-1)^j\im (\alpha-1)\theta}
			\begin{bmatrix}
				1&-(-1)^j\im
			\end{bmatrix}
		Bx
			\\	&=\alpha r^{\alpha}	(b_j e^{(-1)^j\im(2-\alpha)\theta}-k^2 v_0 e^{-(-1)^j\im\alpha\theta})/2
			,\qquad j=1,2,
		\end{split}
	\end{equation*}
where $b_j=b_{11}-b_{22}-(-1)^j2\im b_{12}$, $j=1,2$.
{Then}
\begin{equation*}
	\begin{split}
		&\frac{2}{ h}\int_{\weg_{\theta_0,\epa}} e^{-\Phi_j/h} \,V\cdot \nabla\Phi_j \,dx 
		\\=	&	
		\frac{\alpha}{ h}\int_{-\theta_0}^{\theta_0}(b_je^{(-1)^j\im(2-\alpha)\theta}-k^2 v_0 e^{-(-1)^j\im\alpha\theta})
		\int_{0}^{\epa}r^{1+\alpha}e^{-\Phi_j/h}drd\theta
		\\=&
		{\Gamma(\frac{2}{\alpha}+1)}h^{2/\alpha}
		\int_{-\theta_0}^{\theta_0}
		b_je^{(-1)^j\im 4\theta}-k^2 v_0 e^{(-1)^j\im 2\theta}
		d\theta
		+o\pare{  e^{-\epa^\alpha \delta /(3h)}}.
	\end{split}
\end{equation*}
Therefore, we obtain that
\begin{equation*}
	\frac{1}{h}\int_{\weg_{\theta_0,\epa}}e^{-\Phi_j/h} \,V\cdot \nabla\Phi \,dx
	=\widetilde{C}_{1,j} h^{2/\alpha}+o\pare{  e^{-\epa^\alpha \delta /(3h)}}
\end{equation*}
{with} 
\begin{equation*}
	\begin{split}
		\widetilde{C}_{1,j}
		&=\frac{\Gamma({2}/{\alpha})}{(-1)^j\im 2\alpha}
		(e^{(-1)^j\im 2\theta_0}-e^{-(-1)^j\im 2\theta_0})
		\pare{\frac{b_j}{2}(e^{(-1)^j\im 2\theta_0}+e^{-(-1)^j\im 2\theta_0})
			-k^2v_0}.
	\end{split}
\end{equation*}
Moreover, recalling \eqref{eq:decayIntv} 
we achieve \eqref{eq:decayInt3} with the {constants}
\begin{equation*}
	\begin{split}
		C_{2,j}&
		=\frac{\im\jump_1\Gamma({2}/{\alpha})}{(-1)^j 2\alpha}
		( e^{(-1)^j\im   2\theta_0} -e^{- (-1)^j 2\theta_0})
		\pare{\frac{b_j}{2}(e^{(-1)^j\im 2\theta_0}+e^{-(-1)^j\im 2\theta_0})
			-k^2v_0 (1-\frac{\jump_2}{\jump_1})}.
	\end{split}
\end{equation*}
	Notice that $e^{\mp\im 4\theta_0}\neq 1$ since $2\theta_0\in(0,2\pi)\setminus\{\pi\}$, and that $e^{\mp\im 4\theta_0}= -1$ if and only if $2\theta_0=\pi/2, 3\pi/2$. 
If $2\theta_0\in\{\pi/2,3\pi/2\}$ then ${C}_{2,j}=0$ only when $\jump_1=\jump_2\neq 0$, $j=1,2$. Otherwise if $2\theta_0\notin\{\pi/2,3\pi/2\}$, then $C_{2,1}=C_{2,2}=0$ if and only if 
\begin{equation*}
	b_{j}
	=2k^2v_0\pare{1- \frac{\jump_2}{\jump_1}} \frac{1}{e^{(-1)^j\im   2\theta_0} +e^{-(-1)^j\im 2\theta_0}}
	=k^2v_0\pare{1- \frac{\jump_2}{\jump_1}} \frac{1}{\cos 2\theta_0},
\end{equation*}
which is equivalent to that $b_{11}=-k^2v_0(1-c_0)/2$, $b_{22}=-k^2v_0(1+c_0)/2$ and $b_{12}=b_{21}=0$.
The proof is complete.
\end{proof}

\subsection{Proof of Proposition~\ref{prop:mainLocal}}\label{sec:proofProp}
We are finally in a position to prove Proposition~\ref{prop:mainLocal}.

Let $\rho$, $\gamma$ be the same as those in Theorem~\ref{thm:main} with a corner of aperture $2\theta_0$ where $(\gamma,\rho)$ is admissible. Denote $\jump_1$ and $\jump_2$ as the essential jumps of, respectively, $\gamma$ and $\rho$ at the corner as defined in Definition~\ref{def:corner}. We apply the rigid change of coordinates as in Definition~\ref{def:corner}; in particular, the corner is located at the origin under the new coordinates.

Given $(u,v)$ a nontrivial solution to \eqref{eq:ITEPlocal}, then Lemma~\ref{lem:Int2} states 
\begin{equation}\label{eq:IntDecay}
	\int_{\weg_{\theta_0,\epa}}\pare{\gamma-1} \nabla v\cdot\nabla w_j  -k^2 (\rho -1) v w_j\, dx
	=o(e^{-\epa^{\alpha}\delta/(3h)}),\qquad j=1,2,
\end{equation}
where $h$ is an arbitrary positive constant which is sufficiently small, $w_1=w$, $w_2=\overline{w}$ and $w$ is the solution to $\nabla\cdot\gamma\nabla w+k^2\rho w=0$ as specified in Corollary~\ref{cor:CGO}. In particular, $w$ is of the form \eqref{eq:CGO} with $\alpha\in(0,\frac{1}{2}\pi/\theta_0)$, $A\equiv 1$ and the estimates \eqref{eq:CGOresid} for any $p_1\in {(2/\alpha,\infty)}$ and $p_2\in(2,2/\alpha)$. 

If $v$ can be extended into $B_{\tilde{\epa}}$ while remaining as a solution to $\Delta v+k^2v=0$, then $v$ is real-analytic in $B_{\epa}$, in which case we can consider its Taylor series. 
Denote $N_0$ as the vanishing order of $v$ at the corner and $N$ as that of $\nabla v$. In other words, the first non-zero term of the Taylor series of $v$ is the $N_0$-th order term, which is a (harmonic) homogeneous polynomial of degree $N_0$ in $x$ and is denoted as $v_{N_0}(x)$. Similarly, the first non-zero term in the Taylor series of $\nabla v$, denoted as $V_{N}(x)$, is a harmonic and homogeneous polynomial (vector) of degree $N$.

We first show that such an extension for $v$ never happens in the case of Item (I-a) from Theorem~\ref{thm:main}. That is to say, if for some constants $\beta_1>2$, $\beta_2>0$ and $c_2\neq 0$,
\begin{equation*}
	\abs{\gamma^{-1/2}\pare{\gamma-1}}\le C_1 |x|^{\beta_1}
	\AnD
	\abs{\gamma^{-1/2}\pare{\rho-1}-c_2}\le C_2 |x|^{\beta_2}
	\quad\mbox{a.e. in $\weg_{\theta_0,{\epa}}$},
\end{equation*} 
then $v$ cannot be extended into $B_{\tilde{\epa}}$ as a solution to $\Delta v+k^2v=0$. 
In fact, assuming that $v$ admits such an extension, then Lemma~\ref{lem:localAll} implies that
\begin{equation*}
\abs{\int_{\weg_{\theta_0,\epa}}\pare{\gamma-1} \nabla v\cdot\nabla w_j -k^2(\rho-1) v w_j+ k^2c_2
			v_{N_0} e^{-\Phi_j/h}\,dx}
		\le C h^{\tilde{\beta}_1/\alpha},
\qquad j=1,2,
\end{equation*}
with $\tilde{\beta}_1=\min\{N+1+\beta_1,\,N_0+2+\beta_2,\,N_0+2+\alpha-2/p_1\}$.
Recall from Lemma~\ref{lem:TaylorvV} that $N\ge N_0-1$ and hence $\tilde{\beta}_1>N_0+2$.
On the other hand we have from Lemma~\ref{lem:DecayExactv} that 
\begin{equation*}
k^2c_2\int_{\weg_{\theta_0,\epa}}
		v_{N_0} e^{-\Phi_j/h}\,dx
	= k^2c_2C_{0,N_0,j} h^{(N_0+2)/\alpha}+o(e^{-\epa^{\alpha}\delta/(3h)}),
	\qquad j=1,2,
\end{equation*}
where $C_{0,N_0,1}$ and $C_{0,N_0,2}$ are constants independent of $h$; moreover, at least one of these two constants is nonzero.
Then we arrive at a contradiction against \eqref{eq:IntDecay}. 

Next, we apply similar arguments for the case of 
\begin{equation*}
	\abs{\gamma^{-1/2}\pare{\gamma-1}-c_1}\le C_1 |x|^{\beta_1}
	\AnD
	\abs{\gamma^{-1/2}\pare{\rho-1}-c_2}\le C_2 |x|^{\beta_2}
	\quad\mbox{a.e. in $\weg_{\theta_0,{\epa}}$}
\end{equation*} 
with some constants $c_j\in\RR$ and $\beta_j>0$, $j=1,2$, with $c_1\neq 0$. 
By Lemma~\ref{lem:localAll} again we obtain that
\begin{equation*}
	\begin{split}
&
		\abs{\int_{\weg_{\theta_0,\epa}} (\gamma-1) \nabla v\cdot\nabla w_j-k^2(\rho-1) v w_j
			+\frac{\jump_1}{h}
			V_N(x)\cdot (\nabla\Phi_j) e^{-\Phi_j/h} 
			+ k^2c_2
			v_{N_0} e^{-\Phi_j/h}dx}
\\&		\le Ch^{\tilde{\beta}_2/\alpha},\qquad j =1,2,\qquad\text{with}\quad \tilde{\beta}_2=\min\{\tilde{\beta}_3,\tilde{\beta}_4\},
	\end{split}
\end{equation*}
where $\tilde{\beta}_3=\min\{N_0+2+\beta_2,\,N_0+2+\alpha-2/p_1\}$ and $\tilde{\beta}_4=\min\{N+1+\beta_1,\,N+1+\alpha-2/p_1,\,N+2-2/p_2\}$.
Combining \eqref{eq:IntDecay} we must have
\begin{equation}\label{eq:IntDecay2}
		\abs{\int_{\weg_{\theta_0,\epa}}\frac{\jump_1}{h}
			V_N(x)\cdot (\nabla\Phi_j) e^{-\Phi_j/h} 
			+ k^2c_2
			v_{N_0} e^{-\Phi_j/h}dx}
		\le Ch^{\tilde{\beta}_2/\alpha},\qquad j =1,2.
\end{equation}
In the following, we split our arguments based on the three different relations between $N_0$ and $N$ as established in Lemma~\ref{lem:TaylorvV}. These three cases are, when $N_0\ge 1$, when $N_0=N=0$, and when $N_0=0$ with $N=1$.
If $N_0\ge 1$, which also corresponds to Item (II-b) in Theorem~\ref{thm:main}, then $N=N_0-1$ and $\tilde{\beta}_2=\tilde{\beta}_4>N+1$. On the other hand, we obtain from Lemmas~\ref{lem:DecayExactv}~and~\ref{lem:DecayExact} that
\begin{equation*}
	\int_{\weg_{\theta_0,\epa}} 
	\frac{\jump_1}{h}
	V_N(x)\cdot (\nabla\Phi_j) e^{-\Phi_j/h} 
	+ k^2c_2
	v_{N_0} e^{-\Phi_j/h}dx
	=-c_1C_{1,j}h^{(N+1)/\alpha} +O(h^{(N+3)/\alpha})
\end{equation*}
for $j=1,2$, where $C_{1,j}$, $j=1,2$, are constants independent of $h$.
Recalling \eqref{eq:IntDecay2}, we obtain that for $v$ admits an extension in $B_{\tilde{\epa}}$, there must hold $C_{1,1}=C_{1,2}=0$, where the latter is equivalent to $2\theta_0={l\pi}/{(1+N)}={l\pi}/{N_0}\in(0,2\pi)\backslash\{\pi\}$ for some integer $l$, by Lemma \ref{lem:DecayExact}.
Otherwise if $N=N_0=0$, which coincides with the case of (II-a) from Theorem~\ref{thm:main}, we have again from Lemmas~\ref{lem:DecayExactv}~and~\ref{lem:DecayExact} that
\begin{equation*}
	\int_{\weg_{\theta_0,\epa}} 
	\frac{\jump_1}{h}
	V_N(x)\cdot (\nabla\Phi_j) e^{-\Phi_j/h} 
	+ k^2c_2
	v_{N_0} e^{-\Phi_j/h}dx
	=-c_1C_{1,j}h^{1/\alpha} +O(h^{2/\alpha}),\quad j=1,2,
\end{equation*}
where $C_{1,j}$, $j=1,2$, are constants independent of $h$, and at least one of them is nonzero (see, Remark~\ref{rem:N=0}). However, this contradicts \eqref{eq:IntDecay2} since $\tilde{\beta}_2=\tilde{\beta}_4>1$, that is, $v$ cannot be extended with $N=N_0=0$.
Lastly, for $N_0=0$ and $N=1$, which matches Item (II-c) in Theorem~\ref{thm:main}, we observe from Lemma~\ref{lem:DecayExact3} that
\begin{equation*}
	\int_{\weg_{\theta_0,\epa}} 
	\frac{\jump_1}{h}
	V_N(x)\cdot (\nabla\Phi_j) e^{-\Phi_j/h} 
	+ k^2c_2
	v_{N_0} e^{-\Phi_j/h}dx
	=-C_{2,j}h^{2/\alpha} +o(e^{-\epa^{\alpha}\delta/(3h)}),\quad j=1,2,
\end{equation*}
where $C_{2,j}$, $j=1,2$, are constants independent of $h$. Since in this case $\tilde{\beta}_2>2$, then \eqref{eq:IntDecay2} yields $C_{2,1}=C_{2,2}=0$, and hence by Lemma~\ref{lem:DecayExact3} that either  $2\theta_0\in\{\pi/2,3\pi/2\}$ and $\jump_1=\jump_2$, or $2\theta_0\notin\{\pi/2,3\pi/2\}$ and
\begin{equation*}
	\nabla v_{2}(x)=V_1(x)=-\frac{1}{4}k^2v_0\nabla ((1-c_0)x_1^2+(1+c_0)x_2^2)
	\qquad\mbox{with}\quad c_0=\frac{1-\jump_2/\jump_1}{\cos2\theta_0}. 
\end{equation*}
For the latter case (when $2\theta_0\notin\{\pi/2,3\pi/2\}$), expanding $v$, which is a solution to $\Delta v+k^2v=0$ in $B_{\tilde{\epa}}$, as the sum of spherical waves $J_{m}(kr)e^{\pm \im m\theta}$, $m\in\mathbb{N}$, we obtain that only the even terms (i.e., when $m$ is even) could be nontrivial. Moreover, the second nonzero term in this expansion is of the form 
\begin{equation*}					
	c_0v_0J_2(kr)(e^{\im 2\theta}+e^{-\im 2\theta}),
\end{equation*}
where the first is $v_0J_{0}(kr)$ with $v_0$ a nonzero constant.

The statements associated with Items (I-b) and (I-c) in Theorem~\ref{thm:main} can be proven by analogous arguments.

\section*{Acknowledgments}
The author would like to thank Professor Fioralba Cakoni for very useful discussions.


\begin{thebibliography}{10}

\bibitem{Ale12UCP}
G.~Alessandrini.
\newblock Strong unique continuation for general elliptic equations in 2{D}.
\newblock {\em J. Math. Anal. Appl.}, 386(2):669--676, 2012.

\bibitem{Bla18}
E.~Bl{\aa}sten.
\newblock Nonradiating sources and transmission eigenfunctions vanish at
  corners and edges.
\newblock {\em SIAM J. Math. Anal.}, 50(6):6255--6270, 2018.

\bibitem{BlaLin19}
E.~Bl{\aa}sten and Y.-H. Lin.
\newblock Radiating and non-radiating sources in elasticity.
\newblock {\em Inverse Problems}, 35(1):015005, 16, 2019.

\bibitem{BlaLiu20arX}
E.~Bl{\aa}sten and H.~Liu.
\newblock Scattering by curvatures, radiationless sources, transmission
  eigenfunctions and inverse scattering problems.
\newblock {\em arXiv preprint, arXiv:1808.01425}, 2020.

\bibitem{BLX19arXiv}
E.~Bl{\aa}sten, H.~Liu, and J.~Xiao.
\newblock On an electromagnetic problem in a corner and its applications.
\newblock {\em Analysis \& PDEs}, 2021.
\newblock To appear.

\bibitem{BPS14}
E.~Bl{\aa}sten, L.~P\"aiv\"arinta, and J.~Sylvester.
\newblock Corners always scatter.
\newblock {\em Communications in Mathematical Physics}, 331(2):725--753, 2014.

\bibitem{BTW20}
E.~Bl{\aa}sten, L.~Tzou, and J.-N. Wang.
\newblock Uniqueness for the inverse boundary value problem with singular
  potentials in 2{D}.
\newblock {\em Math. Z.}, 295(3-4):1521--1535, 2020.

\bibitem{BlaV20}
E.~Bl{\aa}sten and E.~V. Vesalainen.
\newblock Non-scattering energies and transmission eigenvalues in
  {$\mathbb{H}^n$}.
\newblock {\em Ann. Acad. Sci. Fenn. Math.}, 45(1):547--576, 2020.

\bibitem{Bukh08}
A.~L. Bukhgeim.
\newblock Recovering a potential from {C}auchy data in the two-dimensional
  case.
\newblock {\em J. Inverse Ill-Posed Probl.}, 16(1):19--33, 2008.

\bibitem{CCH16}
F.~Cakoni, D.~Colton, and H.~Haddar.
\newblock {\em Inverse scattering theory and transmission eigenvalues},
  volume~88 of {\em CBMS-NSF Regional Conference Series in Applied
  Mathematics}.
\newblock Society for Industrial and Applied Mathematics (SIAM), Philadelphia,
  PA, 2016.

\bibitem{CaX21}
F.~Cakoni and J.~Xiao.
\newblock On corner scattering for operators of divergence form and
  applications to inverse scattering.
\newblock {\em Communications in Partial Differential Equations},
  46(3):413--441, 2021.

\bibitem{ColKre19book}
D.~Colton and R.~Kress.
\newblock {\em Inverse acoustic and electromagnetic scattering theory},
  volume~93 of {\em Applied Mathematical Sciences}.
\newblock Springer, Cham, 2019.
\newblock 4th ed.

\bibitem{ColtonMonk88}
D.~Colton and P.~Monk.
\newblock The inverse scattering problem for time-harmonic acoustic waves in an
  inhomogeneous medium.
\newblock {\em Quart. J. Mech. Appl. Math.}, 41(1):97--125, 1988.

\bibitem{DCH21}
H.~Diao, X.~Cao, and H.~Liu.
\newblock On the geometric structures of transmission eigenfunctions with a
  conductive boundary condition and applications.
\newblock {\em Communications in Partial Differential Equations}, 46(4):1--50,
  2021.

\bibitem{DLMF5_9}
{\it NIST Digital Library of Mathematical Functions (Section 5.9)}.
\newblock http://dlmf.nist.gov/, Release 1.1.0 of 2020-12-15.
\newblock F.~W.~J. Olver, A.~B. {Olde Daalhuis}, D.~W. Lozier, B.~I. Schneider,
  R.~F. Boisvert, C.~W. Clark, B.~R. Miller, B.~V. Saunders, H.~S. Cohl, and
  M.~A. McClain, eds.

\bibitem{ElH18}
J.~Elschner and G.~Hu.
\newblock Acoustic scattering from corners, edges and circular cones.
\newblock {\em Arch. Ration. Mech. Anal.}, 228(2):653--690, 2018.

\bibitem{LiX17Corner}
H.~Liu and J.~Xiao.
\newblock On electromagnetic scattering from a penetrable corner.
\newblock {\em SIAM Journal on Mathematical Analysis}, 49(6):5207--5241, 2017.

\bibitem{PSV17}
L.~P\"aiv\"arinta, M.~Salo, and E.~V. Vesalainen.
\newblock Strictly convex corners scatter.
\newblock {\em Revista Matem\'atica Iberoamericana}, 33(4):1369--1396, 2017.

\bibitem{SyU87}
J.~Sylvester and G.~Uhlmann.
\newblock A global uniqueness theorem for an inverse boundary value problem.
\newblock {\em Annals of Mathematics}, 125(1):153--169, 1987.

\bibitem{Uhl14}
G.~Uhlmann.
\newblock Inverse problems: seeing the unseen.
\newblock {\em Bull. Math. Sci.}, 4(2):209--279, 2014.

\bibitem{Vekua62}
I.~N. Vekua.
\newblock {\em Generalized analytic functions}.
\newblock Pergamon Press, London-Paris-Frankfurt; Addison-Wesley Publishing
  Co., Inc., Reading, Mass., 1962.

\bibitem{VogX20}
M.~S. Vogelius and J.~Xiao.
\newblock Finiteness results concerning non-scattering wave numbers for
  incident plane- and {H}erglotz waves.
\newblock {\em SIAM Journal on Mathematical Analysis}, 2021.
\newblock To appear.

\bibitem{VogX21}
M.~S. Vogelius and J.~Xiao.
\newblock A link between scatterer geometry and finiteness of nonscattering
  wavenumbers for incident herglotz waves.
\newblock 2021.
\newblock In preparation.

\end{thebibliography}

\end{document}